\DeclareMathOperator{\lcm}{lcm}
\DeclareMathOperator{\Ext}{Ext}
\DeclareMathOperator{\Hom}{Hom}
\DeclareMathOperator{\RR}{R}
\DeclareMathOperator{\Supp}{Supp}
\begin{document}

\newcommand{\C}{\mathbb C}
\newcommand{\Z}{\mathbb Z}
\renewcommand{\P}{\mathbb P}
\newcommand{\Q}{\mathbb Q}
\newcommand{\R}{\mathbb R}
\newcommand{\N}{\mathbb N}
\newcommand{\Fp}{\mathbb{F}_p}
\newcommand{\Fq}{\mathbb{F}_q}

\newtheorem{thm}{Theorem}[section]
\newtheorem*{thm*}{Theorem}
\newtheorem{prop}[thm]{Proposition}
\newtheorem{lem}[thm]{Lemma}
\newtheorem{cor}[thm]{Corollary}
\newtheorem*{cor*}{Corollary}
\theoremstyle{definition}
\newtheorem{definition}[thm]{Definition}
\newtheorem{condition}[thm]{Condition}
\newtheorem{example}[thm]{Example}
\theoremstyle{remark}
\newtheorem{remark}[thm]{Remark}
\numberwithin{equation}{section}

\title[Intersection form for hypersurfaces of weighted projective space ]{Cohomology of hypersurfaces of weighted projective space and the intersection form on $H^2$}
\author{Anna-Maria Raukh}

\begin{abstract}
Given a hypersurface $X \xhookrightarrow{i} \widetilde{P}^n$ in a weighted projective space, we compute the intersection form on the second cohomology $H^2(X, \Z)^{\otimes n-1} \to \Z$ for the purpose of identifying Fano manifolds obtained from smoothing singular Fanos. In the process, we describe the integer cohomology groups $H^k(X, \Z)$ for $k<n$ and give an explicit formula for the pullback map $i^*$.
\end{abstract}

\maketitle

\section{Introduction}
There is substantial evidence that many Fano manifolds can be obtained by smoothing singular toric Fano varieties, or by smoothing toroidal crossing spaces, e.g., 
\cite{MR4304077},
\cite{corti2024smoothinggorensteintoricfano}, 
\cite{Corti_2022}, 
\cite{corti2024makelogstructures}, 
\cite{corti2025singularlogstructureslog}, 
\cite{grafnitz2025smoothingszeromutablelaurent}, 
\cite{doran2023modularitylandauginzburgmodels}.
In order to identify the Fano manifold obtained from a smoothing, one needs suitable invariants and the intersection form on the second cohomology is a good candidate. For example, a Fano threefold hypersurface can be uniquely identified from its triple intersection form together with the index.
This work is part of a project to compute the 3-form on the second coholomology of all Fano threefolds as well as known higher dimensional Fano manifolds. We focus here on hypersurfaces in weighted projective spaces. Fanos of such form are of current interest \cite{kollar2001}, \cite{campo2023blowupssmoothfanohypersurfaces}, \cite{sano2025deltainvariantsweightedhypersurfaces}. Pieter Belmans kindly informed us that this intersection 3-form already featured in Mori—Mukai "Classification of Fano 3-folds with $B_2\geq 2$" \cite{Mori1981}
and was used by Sergey Galkin \cite{galkin2018smalltoricdegenerationsfano}.

It is straightforward to compute this 3-form on Fano manifolds which are hypersurfaces in ordinary projective space by using the Lefschetz hyperplane theorem \cite{Milnor}. The hyperplane theorem states that, for a general hyperplane section of an $n$-dimensional projective variety, pullback under the inclusion map is an isomorphism for cohomology groups up to degree $n-2$, and a surjection in degree $n-1$. Using the Veronese embedding, one can consider a regular hypersurface as a hyperplane section. Therefore, we may apply the Lefschetz theorem and get that the 3-form on the hypersurface is the same as the intersection form of the ambient space just multiplied by the degree of the hypersurface. 

However, the classic Lefschetz theorem does not hold for weighted projective spaces, because it requires the complement of a hypersurface to be smooth, but weighted projective spaces are singular.
Weighted projective spaces are toric varieties and Batyrev and Cox \cite{BatyrevCox} generalized the Lefschetz theorem to hypersurfaces in toric varieties, but only for the field coefficients while we need the result for integer coefficients. A similar statement can be made about the work by Mavlyutov \cite{mavlyutov} who considered complete intersections. 

\newpage

We provide an analogue of Lefschetz theorem for hypersurfaces in weighted projective spaces and cohomology with integer coefficients: 
\begin{thm*}[Theorem \ref{myt} below] 
    Let $\widetilde{\P}^n = \P(q_0,\ldots,q_n)$ be a weighted projective space with pairwise coprime weights. Let $X$ be a  
 general hypersurface of $\widetilde{\P}^n$ of degree $d$ which is divided by all $q_i$. And let $ i \colon X \rightarrow \widetilde{\P}^n$ be the inclusion map. Then for $k < n-1$ 
    \begin{enumerate}
        \item \begin{equation*}
    H^k(X,\Z) = 
    \begin{cases}
            \Z, & \text{if $k$ is even}\\
            0, & \text{otherwise}
    \end{cases}
\end{equation*}
        \item for even $k=2r$ the pullback map $$i^* \colon H^{2r}(\widetilde{\P}^n, \Z) \rightarrow H^{2r}(X, \Z)$$ is multiplication by $\frac{l_r \cdot l_{n-r}}{l_n}$, where $l_i$ is the integer derived from weights $(q_0,\ldots,q_n)$ (for precise definition see \ref{def:li}).

    \end{enumerate}
\end{thm*}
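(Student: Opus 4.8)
\smallskip
\noindent\textbf{Sketch of the argument.}
The first step is to observe that $X$ avoids $\operatorname{Sing}(\widetilde{\P}^n)$. Since the weights are pairwise coprime, $\operatorname{Sing}(\widetilde{\P}^n)$ consists of the coordinate points $[0:\dots:1:\dots:0]$ with $q_i\ge2$, and since every $q_i$ divides $d$ the monomial $x_i^{d/q_i}$ has degree $d$, so a general degree-$d$ form is nonzero at each of these points. A Bertini argument — using that $\mathcal{O}(d)$ is globally generated because $\prod q_i\mid d$ — then shows that a general $X$ is a smooth projective $(n-1)$-fold disjoint from $\operatorname{Sing}(\widetilde{\P}^n)$; in particular $\widetilde{\P}^n$ is smooth along $X$ and $\mathcal{O}_X(1)=i^{*}\mathcal{O}(1)$ is an honest line bundle.

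For part (1) I would set $U=\widetilde{\P}^n\setminus X$. Since $X$ is an ample effective Cartier divisor, $U$ is affine of dimension $n$, so by Artin vanishing $H^{k}(U,\Z)=0$ for $k>n$. Because $\widetilde{\P}^n$ is smooth along $X$, the normal bundle $N_{X/\widetilde{\P}^n}\cong\mathcal{O}_X(d)$ is a complex line bundle, and the Thom isomorphism gives the Gysin exact sequence
\[
\cdots\to H^{k-2}(X,\Z)\xrightarrow{\ i_{*}\ }H^{k}(\widetilde{\P}^n,\Z)\to H^{k}(U,\Z)\to H^{k-1}(X,\Z)\to\cdots .
\]
For $k\ge n+2$ the two $U$-terms vanish, so $i_{*}\colon H^{j}(X,\Z)\xrightarrow{\ \sim\ }H^{j+2}(\widetilde{\P}^n,\Z)$ for all $j\ge n$. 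Feeding in the known integral cohomology of $\widetilde{\P}^n$ (torsion-free, a copy of $\Z$ in each even degree $\le2n$, zero in odd degrees) shows $H^{j}(X,\Z)$ is $\Z$ for even $j\ge n$ and $0$ for odd $j\ge n$, hence torsion-free there. Then Poincaré duality on the smooth compact oriented $2(n-1)$-manifold $X$, together with the universal coefficient theorem, flips this into the range $k<n-1$: there $2(n-1)-k\ge n$, so $H^{k}(X,\Z)\cong H_{2(n-1)-k}(X,\Z)\cong H^{2(n-1)-k}(X,\Z)\cong H^{2n-k}(\widetilde{\P}^n,\Z)$, which is $\Z$ for $k$ even and $0$ for $k$ odd.

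For part (2), fix $r$ with $2r<n-1$; by (1) both $H^{2r}(\widetilde{\P}^n,\Z)$ and $H^{2r}(X,\Z)$ are infinite cyclic, so $i^{*}$ is multiplication by an integer $c_r$. Let $\xi_j$ be a generator of $H^{2j}(\widetilde{\P}^n,\Z)$ and $\zeta$ a generator of $H^{2(n-1-r)}(X,\Z)$. Since $2(n-1-r)\ge n$, the Gysin isomorphism above shows $i_{*}\zeta=\pm\xi_{n-r}$ and, together with (1), that the cup-product pairing $H^{2r}(X,\Z)\times H^{2(n-1-r)}(X,\Z)\to H^{2(n-1)}(X,\Z)=\Z$ is unimodular. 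Applying the projection formula $i_{*}\bigl((i^{*}\alpha)\smile\beta\bigr)=\alpha\smile i_{*}\beta$ and $\int_{\widetilde{\P}^n}\!\circ\, i_{*}=\int_{X}$ to $\alpha=\xi_r$, $\beta=\zeta$ yields
\[
c_r\cdot(\pm1)=\int_{X}(i^{*}\xi_r)\smile\zeta=\int_{\widetilde{\P}^n}\xi_r\smile i_{*}\zeta=\pm\int_{\widetilde{\P}^n}\xi_r\smile\xi_{n-r},
\]
and since all quantities are positive for the natural choices of generators, $c_r=\int_{\widetilde{\P}^n}\xi_r\smile\xi_{n-r}$. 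Unwinding Definition \ref{def:li} and the description of the cohomology ring of $\widetilde{\P}^n$ — writing $\xi_j=l_j\,h^j$ for $h=c_1(\mathcal{O}(1))\in H^2(\widetilde{\P}^n,\Q)$ and using $\int_{\widetilde{\P}^n}h^{n}=1/l_n$ — identifies this intersection number with $l_r l_{n-r}/l_n$; the case $r=0$ is the tautology $c_0=1$.

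The real content, beyond the rational Lefschetz theorem of Batyrev–Cox, is the passage to integer coefficients, and the two facts that make it work are that $U$ is affine (so Artin vanishing applies) and that $\widetilde{\P}^n$ is smooth along $X$ (so the Gysin sequence, Thom isomorphism and projection formula are available); one also needs the classical integral cohomology ring of $\widetilde{\P}^n$, which is packaged in Definition \ref{def:li} and the lemmas preceding it. The most delicate elementary point is the Bertini statement that a general $X$ is smooth and disjoint from $\operatorname{Sing}(\widetilde{\P}^n)$.
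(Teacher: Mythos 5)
Your argument is correct and arrives at the same formula, but it routes the duality differently from the paper. The paper works entirely in the ambient space: it combines the long exact sequence of cohomology with supports with Alexander duality for non-field coefficients (Theorem \ref{AD}, proved via Verdier duality and the universal coefficient sequence of Theorem \ref{uc}) to read off $H^{2n-k}(X,\Z)$ directly from $H^k_X(\widetilde{\P}^n,\Z)\cong H^k(\widetilde{\P}^n,\Z)$ for $k>n+1$, and then determines $i^*$ from a commutative square built out of the sheaf-theoretic projection formula (Lemma \ref{pf}) and the extraordinary cup product. You instead use the Thom isomorphism $H^{j}(X,\Z)\cong H^{j+2}_X(\widetilde{\P}^n,\Z)$ to rewrite that same long exact sequence as a Gysin sequence, first obtain $H^j(X,\Z)$ for $j\ge n$, and only then flip to the range $k<n-1$ by Poincar\'e duality and universal coefficients on the smooth compact oriented manifold $X$ itself; for part (2) you replace the extraordinary cup product by the classical projection formula for the Gysin pushforward together with unimodularity of the intersection pairing on $X$. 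Both proofs rest on the same two geometric inputs --- the complement of $X$ is affine, and $\widetilde{\P}^n$ is smooth along $X$ so that one may excise to a tubular neighborhood --- and on Al Amrani's description of the integral cohomology ring of $\widetilde{\P}^n$. What your version buys is that it avoids Verdier duality and the derived-category universal coefficient theorem entirely, at the cost of an extra pass through Poincar\'e duality on $X$ and of having to justify unimodularity of the pairing (which does follow from part (1), since the relevant groups are torsion-free); note also that your Gysin isomorphism $i_*\zeta=\pm\xi_{n-r}$ is available precisely when $2r\le n-2$, which is exactly the stated range $k<n-1$, so the argument closes up as written.
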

As one can see, the pullback is no longer an isomorphism. This occurs because relative Poincaré duality — which underpins the Lefschetz theorem — requires field coefficients and smooth manifolds, where the cup product between cohomology groups in complementary degrees gives a perfect pairing.  In our case, it does not. Instead, we use the extraordinary cup product for cohomology with support and Alexander duality with non-field coefficients.

We use this result to compute the intersection form on second cohomology of Fano hypersurfaces in weighted projective spaces of any dimension:

\begin{cor*}[Corollary \ref{c27} below]
    For a general smooth hypersurface $X$ of degree $d$ in $\P(q_0, \ldots,  q_n)$, the $n$-form on the second cohomology induced by the cup product is given by:
    \begin{equation*}
        (\alpha_1, \ldots, \alpha_n) \to \frac{l_{n+1}^{n-1} \cdot d}{l_n^n} \prod\limits_{i=1}^n \alpha_i
    \end{equation*}
\end{cor*}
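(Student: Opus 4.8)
The plan is to collapse the $(n-1)$-fold cup-product form down to a single integer — the top self-intersection of a generator of $H^2(X,\Z)$ — and then to evaluate it by pushing the computation up to $\widetilde{\P}^n$ via the projection formula. Write $Q:=q_0\cdots q_n$; for pairwise coprime weights $Q=\lcm(q_0,\dots,q_n)=l_{n+1}$, and in fact $l_i=Q$ for every $1\le i\le n+1$ (immediate from Definition~\ref{def:li}), so the asserted value $l_{n+1}^{\,n-1}d/l_n^{\,n}$ is nothing but $d/Q$. I would first note that smoothness of a general $X$ of degree $d$ forces $q_i\mid d$ for each $i$ (otherwise every such $X$ contains, and is singular at, the $i$-th coordinate point of $\widetilde{\P}^n$), i.e.\ $Q\mid d$; conversely this divisibility makes a general $X$ miss the $n+1$ quotient singularities of $\widetilde{\P}^n$, so by Bertini $X$ is a smooth connected projective variety of complex dimension $n-1$ and $H^{2(n-1)}(X,\Z)\cong\Z$ via $\int_X$. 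Since the weights are pairwise coprime, $\widetilde{\P}^n$ is a rational homology manifold with only those isolated quotient singularities, each $H^{2k}(\widetilde{\P}^n,\Z)\cong\Z$ ($0\le k\le n$) is torsion-free, and $H^2(\widetilde{\P}^n,\Z)=\operatorname{Pic}(\widetilde{\P}^n)=\Z\cdot h$ with $h=c_1(\mathcal O(Q))=Q\,\xi$, where $\xi=c_1^{\Q}(\mathcal O(1))$ satisfies $\int_{\widetilde{\P}^n}\xi^{\,n}=1/Q$; hence $\int_{\widetilde{\P}^n}h^{\,n}=Q^{\,n-1}$. Finally, because $Q\mid d$, $\mathcal O_{\widetilde{\P}^n}(X)\cong\mathcal O(d)=\mathcal O(Q)^{\otimes d/Q}$ is a line bundle and the class of the hypersurface is the integral class $c_1(\mathcal O(X))=(d/Q)\,h$.

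Assume $n\ge 4$, so $2<n-1$ and Theorem~\ref{myt} applies in degree two: $H^2(X,\Z)\cong\Z$ — fix a generator $g$ — and $i^*h=m\,g$ with $m:=l_1 l_{n-1}/l_n=Q$. (For $n\le 3$ the same argument goes through once $H^2(X,\Z)$ is known to be of rank one, and for $n=2$, $X$ is a curve and the formula reads $1$.) The form in the statement is then the multilinear extension of $(g,\dots,g)\mapsto c$ with $c:=\int_X g^{\,n-1}$, and the claim is $c=d/Q$. Applying $i^*$, which is a ring homomorphism, $\int_X (i^*h)^{\,n-1}=m^{\,n-1}\int_X g^{\,n-1}=m^{\,n-1}c$. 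On the other hand $(i^*h)^{\,n-1}=i^*(h^{\,n-1})$, and since $X$ is a Cartier divisor in the rational homology manifold $\widetilde{\P}^n$ the projection formula applies (with $\Q$-coefficients, but the value is manifestly an integer):
\[
\int_X i^*(h^{\,n-1})=\int_{\widetilde{\P}^n}h^{\,n-1}\cup c_1(\mathcal O(X))=\tfrac{d}{Q}\int_{\widetilde{\P}^n}h^{\,n}=\tfrac{d}{Q}\cdot Q^{\,n-1}=d\,Q^{\,n-2}.
\]
Comparing the two expressions, $m^{\,n-1}c=d\,Q^{\,n-2}$, so $c=d\,Q^{\,n-2}/Q^{\,n-1}=d/Q=l_{n+1}^{\,n-1}d/l_n^{\,n}$; substituting $\alpha_j=a_j g$ then yields the form $\big(l_{n+1}^{\,n-1}d/l_n^{\,n}\big)\prod_j a_j$, as desired.

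The honest content is entirely in Theorem~\ref{myt} and the standard description of $H^\bullet(\widetilde{\P}^n,\Z)$; granted these, the corollary is the one-line projection-formula computation above, and the main obstacle is simply to keep the bookkeeping honest over $\Z$ rather than $\Q$. The reason one cannot just quote the Lefschetz theorem as for hypersurfaces in $\P^n$ is that $i^*$ is not an isomorphism and $\widetilde{\P}^n$ is singular, so one must: (a) know $H^2(X,\Z)$ is genuinely $\Z$, not merely rank one rationally; (b) pin down the integer $m=l_1l_{n-1}/l_n$ measuring the defect of $i^*$ in degree two — both of these are exactly what Theorem~\ref{myt} provides; and (c) legitimize Poincaré duality and the projection formula on $\widetilde{\P}^n$, which works because pairwise coprimality makes it a rational homology manifold with isolated quotient singularities and $Q\mid d$ makes $X$ a Cartier divisor disjoint from that singular locus. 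Tracking the generators $g$ and $h$ through (a)--(c) is precisely where the $l_i$-bookkeeping earns its keep, even though in this pairwise coprime setting it ultimately collapses to powers of $q_0\cdots q_n$.
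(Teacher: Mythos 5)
Your argument is correct (for $\dim X\ge 3$, which is also the range where the paper's own statement makes sense, since $H^2(X,\Z)\cong\Z$ fails for surfaces), and it reaches the right value $d/(q_0\cdots q_n)$, which agrees with $l_{n+1}^{\,n-1}d/l_n^{\,n}$ once one notes that all the $l_i$ collapse to the product of the weights in the pairwise coprime case. But your route is genuinely different from the paper's. You stay on the singular ambient space: you identify the integral generator $h$ of $H^2(\widetilde{\P}^n,\Z)$ with $c_1(\mathcal O(Q))$, compute $\int_{\widetilde{\P}^n}h^n=Q^{n-1}$, and evaluate $\int_X i^*(h^{n-1})$ by the projection formula against the divisor class $[X]=(d/Q)h$, finally dividing by the degree-two pullback factor $m=l_1l_{n-1}/l_n$ from Theorem \ref{myt}. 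The paper instead pulls everything back along the degree-$l_{n+1}$ cover $\phi\colon\P^{n+1}\to\widetilde{\P}^{n+1}$ of Theorem \ref{t2}, replaces $X$ by $X'=\phi^{-1}(X)$, a degree-$d$ hypersurface of ordinary projective space where the classical Lefschetz theorem gives the form $\alpha\mapsto d\,\alpha^n$ outright, and then transports the answer back using the known multiplication factors of $\phi^*$ and the degree of $\phi$. What your approach buys is that it never needs the auxiliary hypersurface $X'$ or the classical Lefschetz theorem; what it costs is the one step the paper's detour is designed to avoid, namely doing intersection theory on the singular $\widetilde{\P}^n$: the identity $\int_X i^*\beta=\int_{\widetilde{\P}^n}\beta\cup c_1(\mathcal O(X))$ requires knowing that the homology class of the Cartier divisor $X$ is the cap of $c_1(\mathcal O(X))$ with the fundamental class of a singular space. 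Your justification (rational homology manifold plus torsion-freeness of $H^*(\widetilde{\P}^n,\Z)$, so the rational computation pins down the integer) is sound, but it is the step a referee would press on, and it is worth writing out rather than asserting. Two small bookkeeping points: the statement you were handed (the introduction's version) places $X$ in $\P(q_0,\ldots,q_n)$ while quoting a formula in $l_{n+1}$, which is undefined for $n+1$ weights; your reading ($(n-1)$-fold product, value $d/Q$) is the consistent one and matches the body's Corollary \ref{c27} after the index shift, but your line ``$l_i=Q$ for every $1\le i\le n+1$'' should be $1\le i\le n$. Also, for $r\ge 1$ one has $l_I=\prod_{i\in I}q_i$ only because $|I|\ge 2$ forces $\gcd=1$ under pairwise coprimality; stating that explicitly would make the collapse to $Q$ airtight.
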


Table \ref{table:1} presents the intersection forms and indices for all smooth weighted Fano hypersurfaces in low dimensions. In dimensions 3, 4, and 5, the intersection form and index uniquely distinguish each hypersurface. However, in dimension 6, we exhibit an example of two hypersurfaces with different topology that share the same intersection form and index, indicating that these invariants are insufficient to uniquely identify Fano hypersurfaces in higher dimensions.

I am grateful to Pieter Belmans, Thomas Guidoni, Dmitrii Krekov, Dmitry Mineev, Erik Paemurru, David Ploog, Helge Ruddat and Konstantin Shramov for useful discussions. I acknowledge the financial support of the University of Stavanger.

\section{Cohomology of weighted projective spaces}

We will only deal with spaces over the field of complex numbers $\C$. We will denote the multiplicative group of this field by $\C^*$.

\begin{definition}[Weighted projective space]

Let $q=(q_0,\ldots,q_n)$ with $q_i\in\N$ and define the corresponding action of $\C^*$ on $\C^{n+1}\setminus\{0\}$ by

$$\lambda\cdot(x_0,\ldots,x_n)=(\lambda^{q_0}x_0,\ldots,\lambda^{q_n}x_n).$$

The quotient of this action is a \textit{weighted projective space}. It is a projective variety. We will denote it as $\P(q_0,\ldots,q_n)$ or $\widetilde{\P}^n$.

\end{definition}

\begin{example}
    $\P(1,\ldots, 1)$ is the ordinary complex projective space.
\end{example}

As in ordinary projective space, we can use homogeneous coordinates $(x_0:\ldots:x_n)$ and define a \textit{hypersurface of degree} $d$ as the zero locus of a weighted homogeneous polynomial of a weighted degree $d$.

We are interested in integer cohomology of hypersurfaces in weighted projective spaces. We first need to understand the cohomology ring of the weighted projective space itself.

To describe the cohomology ring of a weighted projective space, we need to introduce one combinatorial definition. 

\begin{definition}\label{def:li}
    For each subset $I = \{i_0,\ldots, i_k\}$ of $\{0, \ldots, n\}$ denote 
    \begin{equation*}
        l_I = \frac{i_0 \cdot \ldots \cdot i_k} {\gcd (i_0, \ldots , i_k)}
    \end{equation*}

    Define $l_r$ as
    \begin{equation*}
        l_r = \lcm \{l_I \mid I \subset \{0,\ldots,n \} \text { and } |I| = r+1 \}
    \end{equation*}
\end{definition}

\begin{example}
    $l_0 = 1$, $l_1 = \lcm \{q_0, \ldots, q_n\}$, $l_n = q_0 \cdot \ldots \cdot q_n / \gcd (q_0, \ldots, q_n)$
\end{example}

With this definition in place, we now recall two theorems from \cite{amrani}, which completely describe the cohomology ring of weighted projective spaces.

\begin{thm}\label{t1} For a weighted projective space $\widetilde{\P}^n$ we have
\begin{equation*}
    H^i(\widetilde{\P}^n,\Z) \simeq
    \begin{cases}
            \Z, & \text{if $i=2r$, $0 \leq r \leq n$}\\
            0, & \text{otherwise}
    \end{cases}
\end{equation*}

\end{thm}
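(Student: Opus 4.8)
The result is classical (see \cite{amrani} and Kawasaki's original computation of the cohomology of twisted projective spaces), so one option is simply to cite it; but here is how I would prove it. The plan is to induct on $n$, stripping off one coordinate at a time. The base case $\widetilde{\P}^0=\P(q_0)$ is a point. For the inductive step I would fix the point $P=(0:\cdots:0:1)$ and use two elementary observations. First, the map $(x_0:\cdots:x_n)\mapsto(x_0:\cdots:x_{n-1}:(1-t)x_n)$ descends to a well-defined deformation retraction of $\widetilde{\P}^n\setminus\{P\}$ onto the sub-weighted-projective space $\{x_n=0\}\cong\P(q_0,\ldots,q_{n-1})$, so that $H^k(\widetilde{\P}^n\setminus\{P\},\Z)\cong H^k(\P(q_0,\ldots,q_{n-1}),\Z)$ is known by the induction hypothesis. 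Second, after using $x_n$ to rigidify the $\C^*$-action, the affine chart $\{x_n\neq 0\}$ is homeomorphic to $\C^n/\mu_{q_n}$, where $\mu_{q_n}$ is the group of $q_n$-th roots of unity acting linearly on $\C^n$ with weights $q_0,\ldots,q_{n-1}$; this is the open cone on the generalized lens space $L:=S^{2n-1}/\mu_{q_n}$, so it is contractible and $\{x_n\neq 0\}\setminus\{P\}$ deformation retracts onto $L$.

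Next I would run the long exact sequence of the pair $(\widetilde{\P}^n,\widetilde{\P}^n\setminus\{P\})$. Excision together with the cone structure identifies $H^k(\widetilde{\P}^n,\widetilde{\P}^n\setminus\{P\};\Z)$ with $\widetilde{H}^{k-1}(L,\Z)$, and the integral cohomology of a generalized lens space is standard: $\widetilde{H}^j(L,\Z)$ is $\Z/q_n$ for even $j$ with $0<j<2n-1$, is $\Z$ for $j=2n-1$, and vanishes otherwise. Feeding these two inputs into the long exact sequence, one reads off at once that $H^0(\widetilde{\P}^n,\Z)=\Z$, that $H^1(\widetilde{\P}^n,\Z)=0$, that $H^k(\widetilde{\P}^n,\Z)=0$ for $k>2n$, that $H^{2n}(\widetilde{\P}^n,\Z)\cong\Z$, that for $1\le i\le n-1$ the group $H^{2i}(\widetilde{\P}^n,\Z)$ is the kernel of a homomorphism $\Z\to\widetilde{H}^{2i}(L,\Z)$ into a finite group and hence is $\cong\Z$, and that the only groups not yet determined are quotients of $\Z/q_n$, one in each odd degree $3,5,\ldots,2n-1$.

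The remaining, substantive task is to show those odd groups vanish, i.e.\ that each connecting homomorphism $H^{2i-2}(\P(q_0,\ldots,q_{n-1}),\Z)=\Z\longrightarrow\widetilde{H}^{2i-2}(L,\Z)=\Z/q_n$ is surjective for $2\le i\le n$. The natural approach is to note that the $\mu_{q_n}$-action on $S^{2n-1}$ factors through the weighted circle action with weights $q_0,\ldots,q_{n-1}$, so that $L\to\P(q_0,\ldots,q_{n-1})$ is the orbifold circle bundle of a tautological orbi-line bundle $\mathcal{O}(1)$; the connecting map then becomes, up to sign, the reduction modulo $q_n$ of the Euler/Gysin map, and one checks inductively that its Euler class together with the cup-powers thereof generate the relevant copies of $\Z/q_n$. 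For weights that are not well formed one first applies the standard isomorphisms $\P(q_0,\ldots,q_n)\cong\P(\ldots)$ stripping common factors --- under which the left side is unaffected --- so that $L$ is an honest lens space.

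I expect this last step to be the only genuine difficulty. The scaffolding --- the retraction, the cone neighborhood, the excision, the long exact sequence --- is routine, but proving that all of the intermediate lens-space torsion cancels (it must, though not for purely formal reasons) is where the content lies, and it requires either the Euler-class bookkeeping sketched above or a direct comparison with Kawasaki's computation.
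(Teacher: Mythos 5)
The paper's own ``proof'' of this theorem is simply a citation of Al Amrani (the result goes back to Kawasaki), so your first option --- cite it --- is exactly what the paper does, and for your direct argument there is nothing in the paper to compare against. As a proof attempt, however, your sketch has two genuine gaps. First, the input you rely on --- that the link $L=S^{2n-1}/\mu_{q_n}$ is a generalized lens space with $\widetilde{H}^{j}(L,\Z)=\Z/q_n$ in even degrees $0<j<2n-1$ and $\Z$ in degree $2n-1$ --- is only valid when the $\mu_{q_n}$-action on $S^{2n-1}$ is free, i.e.\ when $\gcd(q_i,q_n)=1$ for all $i<n$. Passing to well-formed weights does not arrange this: well-formedness only forbids a common factor of $n$ of the $n+1$ weights, not of pairs. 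For example $\P(6,10,15,35,21)$ is well formed, yet every weight shares a factor with some other weight, so no matter which coordinate you strip, the local group $\mu_{q_i}$ has fixed points on the sphere, $L$ is not a manifold, and its integral cohomology does not follow the lens-space pattern you quote. (Note also that Theorem \ref{t1} is stated for arbitrary weights; the pairwise-coprimality hypothesis enters the paper only later, so your proof must cover these cases.) Computing $H^*(S^{2n-1}/\mu_{q_n},\Z)$ for such non-free weighted actions is essentially Kawasaki's original computation, so at this point the argument collapses back onto the reference you were trying to avoid.

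Second, even in the free case you identify the decisive step yourself but do not carry it out: the vanishing of the odd-degree groups requires each connecting map $H^{2i-2}(\P(q_0,\ldots,q_{n-1}),\Z)\cong\Z\to\widetilde{H}^{2i-2}(L,\Z)\cong\Z/q_n$ to be surjective, i.e.\ that the restriction of the generator $\xi_{i-1}$ (not merely of a power of an Euler class --- the generator in degree $2i-2$ is an $l$-dependent fraction of such powers) generates $\Z/q_n$. This is exactly where the arithmetic of the weights enters, and ``Euler-class bookkeeping'' is a plan, not a proof; without it the long exact sequence only bounds the odd groups by quotients of $\Z/q_n$. So the scaffolding (retraction onto $\{x_n=0\}$, cone chart, excision, long exact sequence) is fine, but the two pieces that carry the actual content --- the cohomology of the link for arbitrary weights and the surjectivity of the connecting maps --- are missing, and the proposal as written does not yet constitute a proof.
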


\begin{proof}
    \cite{amrani}
\end{proof}

Theorem \ref{t1} tells us that the cohomology groups of a weighted projective space are equal to the cohomology groups of the usual projective space. The next theorem will describe how exactly they map to each other.

\begin{thm}\label{t2}
    Let $\phi \colon \P^n \to \widetilde{\P}^n$ be the map that sends each point $(x_0: \ldots :x_n)$ to $\phi (x_0: \ldots :x_n) = (x_0^{q_1}: \ldots :x_n^{q_n})$. Let $\xi = c_1(L^*)$ be the usual generator of $H^2(\P^n, \Z)$ ($L$ is the canonical bundle over $\P^n$). 
    Then for each $k$, $0 \leq k \leq n$, there exists a unique $\xi_k \in H^{2k}(\widetilde{\P}^n, \Z)$ such that the pullback $\phi^*(\xi_k) = l_k \xi^k$, and $\{ \xi_1, \ldots, \xi_n \}$ form a graded $\Z$-basis of the free abelian group $H^*(\widetilde{\P}^n, \Z)$. In other words, there are commutative diagrams:

\begin{center}

\begin{tikzcd}
H^{2k}(\widetilde{\P}^n, \Z) \arrow[d, equal] \arrow{rr}{\phi^*} &  & H^{2k}(\P^n, \Z) \arrow[d, equal] \\
\Z \arrow{rr}{\cdot l_r}                             &  & \Z                               
\end{tikzcd}

\end{center}
    
\end{thm}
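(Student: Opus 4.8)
The plan is to reduce everything to computing, for each $r$ with $0\le r\le n$, the index of the pullback in degree $2r$. By Theorem~\ref{t1} both $H^{2r}(\widetilde{\P}^n,\Z)$ and $H^{2r}(\P^n,\Z)$ are infinite cyclic and odd cohomology vanishes, and since $\phi$ is a finite surjection $\phi^*$ is rationally injective, hence injective; so after choosing a generator $g_r$ of $H^{2r}(\widetilde{\P}^n,\Z)$ we have $\phi^*g_r=m_r\,\xi^r$ for a unique $m_r\ge 1$. Everything will follow from the claim $m_r=l_r$: granting it, let $\xi_r$ be the generator with $\phi^*\xi_r=+l_r\xi^r$ (it exists since $m_r=l_r$ and is unique since $\phi^*$ is injective and $l_r>0$), and then $\{\xi_0,\dots,\xi_n\}$ is automatically a graded $\Z$-basis because each $\xi_r$ generates $H^{2r}(\widetilde{\P}^n,\Z)\cong\Z$. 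Two cases are immediate: $m_0=1=l_0$; and $\phi$ exhibits $\widetilde{\P}^n$ as the quotient of $\P^n$ by the finite abelian group $G=(\mu_{q_0}\times\cdots\times\mu_{q_n})/\mu_{\gcd(q_0,\dots,q_n)}$ acting by coordinatewise scaling, with $|G|=l_n$, so $\phi$ is finite of degree $l_n$ and $\phi^*$ on top cohomology is multiplication by $l_n$, i.e.\ $m_n=l_n$. In degree $2$ one can argue directly: $H^2(\widetilde{\P}^n,\Z)\cong\operatorname{Pic}(\widetilde{\P}^n)$ via the exponential sequence (using $H^i(\mathcal O_{\widetilde{\P}^n})=0$ for $i>0$) is generated by $c_1(\mathcal O(l_1))$, and $\phi^*\mathcal O_{\widetilde{\P}^n}(l_1)=\mathcal O_{\P^n}(l_1)$, so $m_1=l_1$.

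For the lower bound $l_r\mid m_r$ in general, restrict to the weighted coordinate subspaces. For $I\subseteq\{0,\dots,n\}$ with $|I|=r+1$ put $\widetilde{\P}^I=\{x_j=0:j\notin I\}\cong\P(q_i:i\in I)$ and let $\P^I\subseteq\P^n$ be the corresponding linear $\P^r$; then $\phi$ restricts to $\phi^I\colon\P^I\to\widetilde{\P}^I$, which is the analogue of $\phi$ for the weights $(q_i)_{i\in I}$ and has degree $l_I=\prod_{i\in I}q_i/\gcd_{i\in I}q_i$, and the square formed by $\phi,\phi^I$ and the two inclusions commutes. On $H^{2r}$ this gives $(\phi^I)^*\circ(\iota^I)^*=(j^I)^*\circ\phi^*$. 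Here $(j^I)^*$ sends $\xi^r$ to the generator of $H^{2r}(\P^I,\Z)$, $(\iota^I)^*$ sends $g_r$ to an integer multiple of the generator of $H^{2r}(\widetilde{\P}^I,\Z)$, and $H^{2r}(\widetilde{\P}^I,\Z)$ is the top group of the $r$-dimensional $\widetilde{\P}^I$, on which $(\phi^I)^*$ is multiplication by $\deg\phi^I=l_I$. Comparing indices, $l_I\mid m_r$; taking the least common multiple over all such $I$ gives $l_r\mid m_r$.

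For an upper bound, use the transfer $\tau\colon H^*(\P^n,\Z)\to H^*(\widetilde{\P}^n,\Z)$ attached to the finite quotient $\phi\colon\P^n\to\P^n/G=\widetilde{\P}^n$; it satisfies $\tau\circ\phi^*=|G|\cdot\mathrm{id}=l_n\cdot\mathrm{id}$, so in degree $2r$ there is an integer $s_r$ with $s_rm_r=l_n$, i.e.\ $m_r\mid l_n$. When the weights $q_0,\dots,q_n$ are pairwise coprime --- which is the case needed in the rest of the paper --- one checks directly from Definition~\ref{def:li} that $l_r=q_0\cdots q_n=l_n$ for every $1\le r\le n$; combined with $l_r\mid m_r\mid l_n$ this forces $m_r=l_r$, completing the proof in that case.

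For general weights the above only pins $m_r$ down to $l_r\mid m_r\mid l_n$, and proving the sharp equality $m_r=l_r$ --- equivalently, that $\operatorname{coker}(\phi^*)$ in degree $2r$ is exactly $\Z/l_r$ --- is the main point and the main obstacle: it is precisely where the fine integral structure of $H^*(\widetilde{\P}^n,\Z)$ enters, and is the content of \cite{amrani}. One way to handle it is to run the affine stratification $\widetilde{\P}^n=\bigsqcup_{i=0}^n\C^i/\mu_{q_i}$ together with the compatible stratification $\P^n=\bigsqcup_{i=0}^n\C^i$ and the induced map of the associated spectral sequences, which degenerate rationally but carry essential torsion integrally, the factors $l_r$ appearing exactly as the discrepancy between the $\Z$- and $\Q$-answers; alternatively one can localize at the torus fixed points and read off $\phi^*$ from the weighted GKM data, $\phi$ being equivariant for the isogeny $\lambda\mapsto(\lambda^{q_i})$ of tori. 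Granting this last input, the theorem follows by the reduction of the first paragraph.
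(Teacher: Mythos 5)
The paper does not prove this theorem at all --- its ``proof'' is the citation \cite{amrani}[I.4] --- so your argument is necessarily a different route, and most of it is sound. The sandwich $l_r\mid m_r\mid l_n$ is a genuinely nice self-contained argument: the lower bound via restriction to the coordinate subspaces $\widetilde{\P}^I\cong\P(q_i:i\in I)$, where $H^{2r}$ is the top group and $(\phi^I)^*$ is multiplication by $\deg\phi^I=l_I$ (Theorem~\ref{t3} applied to the subspace), correctly yields $l_I\mid m_r$ for every $I$ and hence $\lcm_I l_I=l_r\mid m_r$; the upper bound via the transfer for the finite quotient $\P^n\to\P^n/G$ correctly yields $m_r\mid |G|=l_n$. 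Your observation that pairwise coprime weights force $l_r=l_n$ for all $1\le r\le n$ is also correct, so in the only case the rest of the paper ever invokes (Proposition~\ref{p1} restricts to pairwise coprime weights before Theorem~\ref{myt} is applied), your argument is complete and arguably preferable to a bare citation. The degree-$2$ step via $\operatorname{Pic}(\widetilde{\P}^n)=\Z\cdot\mathcal{O}(l_1)$ is a standard fact you import without proof, but it is not even needed once the sandwich closes.

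The genuine gap is the one you flag yourself: for weights that are not pairwise coprime, $l_r$ and $l_n$ differ in general and the sandwich does not determine $m_r$; the concluding paragraph about stratifications and GKM data is a gesture, not a proof, so the theorem as stated (for arbitrary weights) is not established by your argument --- you end up deferring to \cite{amrani} exactly as the paper does. Two smaller points to tighten if you keep the argument: (i) uniqueness of $\xi_k$ should be stated as following from injectivity of $\phi^*$ on the torsion-free group $H^{2k}(\widetilde{\P}^n,\Z)\cong\Z$, which you do have from $\tau\circ\phi^*=l_n\neq 0$; (ii) the claim that $(\phi^I)^*$ acts on the top cohomology of the possibly non-well-formed $\P(q_i:i\in I)$ by its degree deserves a sentence, since these subspaces are singular and you are implicitly using that $H^{2r}(\widetilde{\P}^I,\Z)\cong\Z$ pairs with the fundamental class; both facts are covered by Theorems~\ref{t1} and~\ref{t3} but should be cited there explicitly.
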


\begin{proof}
    \cite{amrani}[I.4]
\end{proof}

\newpage

It can be seen from these two theorems that the multiplication in the ring $H^*(\widetilde{\P}, \Z)$ is of the form:
\begin{equation}\label{1}
    \xi_k \cdot \xi_j = \frac{l_r \cdot l_j}{l_{k+j}} \xi_{k+j} 
\end{equation}

We will also recall another result of Al Amrani about the degree of the map $\phi$.

\begin{thm}\label{t3}
    The degree of a map $\phi$ is $q_0 \cdot \ldots \cdot q_n / \gcd (q_0, \ldots ,q_n) = l_n$.
\end{thm}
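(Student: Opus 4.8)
The plan is to recognise $\phi$ as the quotient map for a finite abelian group acting on $\P^n$, and then to invoke the standard fact that the degree of such a map equals the order of the group once one checks the action is generically free (automatic over $\C$). First I would lift $\phi$ to the map $\Phi\colon\C^{n+1}\setminus\{0\}\to\C^{n+1}\setminus\{0\}$, $(x_0,\ldots,x_n)\mapsto(x_0^{q_0},\ldots,x_n^{q_n})$, and record the equivariance $\Phi(\lambda x)=\lambda\cdot_{\mathrm{wt}}\Phi(x)$ (standard action on the source, weighted action on the target), which shows $\Phi$ descends to $\phi$. Put $A=\bigl(\prod_{i=0}^n\mu_{q_i}\bigr)/\mu_{\Delta}$, where $\mu_m\subset\C^*$ is the group of $m$-th roots of unity and $\mu_{\Delta}=\mu_{\gcd(q_0,\ldots,q_n)}$ is embedded diagonally, so that $|A|=\prod_i q_i/\gcd(q_0,\ldots,q_n)=l_n$. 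The group $A$ acts on $\P^n$ by $(\zeta_0,\ldots,\zeta_n)\cdot[x_0:\cdots:x_n]=[\zeta_0x_0:\cdots:\zeta_nx_n]$, this being well defined precisely because scaling all $\zeta_i$ by one common root of unity does not change the point, and $\mu_{\Delta}$ is exactly that ambiguity.

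Next I would check that $\phi$ is constant on $A$-orbits ($\Phi(\zeta_ix_i)=(\zeta_i^{q_i}x_i^{q_i})=\Phi(x)$ since $\zeta_i^{q_i}=1$) and that a generic fibre of $\phi$ is a single free $A$-orbit. For the latter, fix $[y]$ with all $y_i\neq0$ and pick $q_i$-th roots $y_i^{1/q_i}$; any $[x]$ with $\phi[x]=[y]$ satisfies $x_i^{q_i}=\lambda^{q_i}y_i$ for some $\lambda\in\C^*$, hence $x_i=\lambda\,y_i^{1/q_i}\zeta_i$ with $\zeta_i\in\mu_{q_i}$, so $[x]=(\zeta_0,\ldots,\zeta_n)\cdot[y_0^{1/q_0}:\cdots:y_n^{1/q_n}]$, and conversely every such point lies in the fibre. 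Two tuples $(\zeta_i)$, $(\zeta_i')$ give the same point in $\P^n$ iff they differ by a common $\mu\in\bigcap_i\mu_{q_i}=\mu_{\Delta}$, i.e. iff they are equal in $A$; so the generic fibre is exactly one free $A$-orbit, of size $|A|=l_n$. The same bookkeeping shows every fibre of $\phi$ is finite, so $\phi$ is a proper morphism with finite fibres between irreducible projective varieties of dimension $n$, hence finite and surjective; and over $\C$ a dominant generically finite morphism has degree equal to the cardinality of its generic (reduced) fibre, giving $\deg\phi=l_n$.

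The only genuinely delicate point is the bookkeeping around the diagonal subgroup $\mu_{\Delta}$: one must simultaneously verify that it is (i) the kernel making the $A$-action on $\P^n$ well defined, (ii) exactly the subgroup identified in the fibre computation, and (iii) the stabiliser of a generic point is trivial, so the generic fibre has full size $|A|$ rather than something smaller (equivalently, the generic fibre is reduced). All three reduce to the elementary remark that if $\zeta_ix_i=\mu x_i$ for all $i$ with every $x_i\neq0$, then $\zeta_0=\cdots=\zeta_n=\mu$ must lie in every $\mu_{q_i}$, hence in $\mu_{\Delta}$. For completeness I would also mention the alternative torus computation: restricting $\phi$ to the dense algebraic tori realises it as the homomorphism $(\C^*)^{n+1}/\C^*\to(\C^*)^{n+1}/\C^*$ induced by $\operatorname{diag}(q_0,\ldots,q_n)$, whose degree is the index of the image lattice in $\Z^{n+1}/\Z(1,\ldots,1)$, and a short Smith normal form (or snake lemma) computation again yields $\prod_i q_i/\gcd(q_0,\ldots,q_n)=l_n$.
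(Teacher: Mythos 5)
Your argument is correct, and it is worth noting that the paper itself gives no proof of Theorem \ref{t3}: the statement is simply recalled from Al Amrani \cite{amrani}, so your write-up supplies an actual argument where the paper supplies a citation. Your route --- lifting $\phi$ to the coordinatewise power map on $\C^{n+1}\setminus\{0\}$, recognising $\phi$ as the quotient of $\P^n$ by the finite group $A=\bigl(\prod_i\mu_{q_i}\bigr)/\mu_{\gcd(q_0,\ldots,q_n)}$ of order $l_n$, and checking that the fibre over a point with all coordinates nonzero is a single free $A$-orbit --- is the standard elementary proof, and the genuinely delicate points (well-definedness of the $A$-action on $\P^n$, identification of the projective ambiguity in the fibre with the diagonal copy of $\mu_{\gcd}$, and triviality of generic stabilisers) are all handled by the correct observation that a common scalar relating two tuples must lie in $\bigcap_i\mu_{q_i}=\mu_{\gcd}$. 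One sentence worth adding in a final version: the paper uses ``degree'' in the topological/cohomological sense (multiplication by $l_n$ on $H^{2n}$, which is how Theorem \ref{t3} enters Corollary \ref{c27}), whereas you compute the cardinality of a generic fibre; these coincide because $\phi$ is a finite surjective morphism of irreducible compact complex $n$-folds that is an unramified covering over a dense open set whose complement has real codimension at least $2$, so the generic sheet number equals the degree on top (co)homology. Your closing torus-isogeny remark is a fine sanity check but needs a little care when $\gcd(q_0,\ldots,q_n)>1$, since the one-parameter subgroup $\lambda\mapsto(\lambda^{q_0},\ldots,\lambda^{q_n})$ is then not injective and the relevant cocharacter lattice of the big torus of $\widetilde{\P}^n$ is $\Z^{n+1}/\Z\cdot(q_0/g,\ldots,q_n/g)$ with $g=\gcd(q_0,\ldots,q_n)$; the main fibre-counting argument does not depend on this aside.
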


\section{Fano hypersurfaces in weighted projective spaces}

 Weighted projective spaces are toric varieties. 

\begin{lem}[\cite{rossi2013weightedprojectivespacestoric}]
    Let $\{e_1, \ldots , e_n\}$ be the basis of the lattice $N$ and $(q_0, \ldots, q_n)$ the set of positive integers. Consider the following n + 1 rational vectors

\begin{equation*}
    v_0=-\frac{1}{q_0}\sum e_i, \quad v_i=\frac{e_i}{q_i}.
\end{equation*}
Let $N_Q$ be the lattice generated by $v_0, \ldots , v_n$ and consider the fan $\Sigma_Q$ generated by $v_0, \ldots , v_n$. Then the toric variety corresponding to $\Sigma_Q$ is the weighted projective space $\P(q_0, \ldots, q_n)$.
\end{lem}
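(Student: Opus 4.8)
The plan is to exhibit an explicit isomorphism between $\Sigma_Q$ and the standard fan of $\P(q_0,\ldots,q_n)$ regarded as a toric variety, after which the claimed identification of toric varieties is immediate. Set $d=\gcd(q_0,\ldots,q_n)$. First I would recall the toric description of the weighted projective space: its torus is $(\C^*)^{n+1}/\C^*$, where $\C^*$ is embedded by $\lambda\mapsto(\lambda^{q_0},\ldots,\lambda^{q_n})$; since this embedding has kernel $\mu_d$, the cocharacter lattice of the torus is $N':=\Z^{n+1}/\Z\cdot(q_0/d,\ldots,q_n/d)$, where we quotient by the saturation of $(q_0,\ldots,q_n)$ so that the result is torsion free. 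Writing $f_0,\ldots,f_n$ for the images in $N'$ of the standard basis of $\Z^{n+1}$, the fan of $\P(q_0,\ldots,q_n)$ has rays $\R_{\geq0}f_i$ and maximal cones $\sigma_j=\operatorname{Cone}(f_i:i\neq j)$ --- this is just the decomposition of $\P(q_0,\ldots,q_n)$ into the charts $\{x_j\neq0\}$, each a quotient $\C^n/\mu_{q_j}$.

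Next I would compare this with $\Sigma_Q$ via the surjection $\pi\colon\Z^{n+1}\to N_Q$ sending the $i$-th standard basis vector to $v_i$ (surjective since the $v_i$ generate $N_Q$ by definition). A direct computation gives the linear relation
\[
\sum_{i=0}^n\frac{q_i}{d}\,v_i \;=\; -\frac{1}{d}\sum_{i=1}^n e_i+\frac{1}{d}\sum_{i=1}^n e_i \;=\;0 .
\]
More is true: an integer tuple $(a_0,\ldots,a_n)$ lies in $\ker\pi$ precisely when $a_i/q_i=a_0/q_0$ for all $i$, so $\ker\pi$ is a rank-one sublattice of $\Z^{n+1}$, and a short prime-by-prime valuation argument shows that its primitive generator is exactly $(q_0/d,\ldots,q_n/d)$. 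Hence $\pi$ descends to a lattice isomorphism $N'\xrightarrow{\ \sim\ }N_Q$ carrying each $f_i$ to $v_i$.

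Finally, this isomorphism sends the ray generators of the fan of $\P(q_0,\ldots,q_n)$ to $v_0,\ldots,v_n$, and the two fans have the same combinatorial type (maximal cones indexed by the $n$-element subsets of $\{0,\ldots,n\}$, i.e. the complements of the single rays), so it is an isomorphism of fans, whence $X_{\Sigma_Q}\cong\P(q_0,\ldots,q_n)$. I expect the only genuine obstacle to be the identification of $\ker\pi$ --- showing that $(q_0/d,\ldots,q_n/d)$ generates \emph{all} integer relations among the $v_i$ and is primitive --- together with quoting the standard description of the fan of a weighted projective space; everything else is formal. As an alternative to the fan comparison one could run the Cox quotient construction directly: the irrelevant locus of $\Sigma_Q$ is $\{0\}$ (the complement of each maximal cone is a single ray), so $X_{\Sigma_Q}=(\C^{n+1}\setminus\{0\})/G$ with $G=\Hom(\operatorname{Cl}(X_{\Sigma_Q}),\C^*)$, and the relation above shows $\operatorname{Cl}(X_{\Sigma_Q})\cong\Z$ with $G=\C^*$ acting with weights $(q_0/d,\ldots,q_n/d)$; since rescaling all weights by $d$ does not change the weighted projective space, this is exactly $\P(q_0,\ldots,q_n)$.
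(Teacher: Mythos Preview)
The paper does not give its own proof of this lemma: it is stated with a bare citation to \cite{rossi2013weightedprojectivespacestoric} and used as background. There is therefore nothing to compare against at the level of argument.

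Your proof is correct and is in fact the standard one. The key computation --- that the integer relations among $v_0,\ldots,v_n$ form the rank-one lattice generated by $(q_0/d,\ldots,q_n/d)$ --- is carried out cleanly, and it immediately identifies $N_Q$ with $\Z^{n+1}/\Z\cdot(q_0/d,\ldots,q_n/d)$, matching the cocharacter lattice of $\P(q_0,\ldots,q_n)$ together with its fan. The alternative Cox-quotient paragraph is also fine and arguably more transparent for a reader who prefers GIT language. One small wording point: when you write ``the fan $\Sigma_Q$ generated by $v_0,\ldots,v_n$'' you are silently using that the intended combinatorics is the full simplicial fan on these rays (all proper subsets span cones); you do say this later, but it would be cleaner to make it explicit up front, since the lemma statement itself is a bit terse on that point.
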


We are interested in smooth hypersurfaces in a weighted projective space. Therefore, we need the singular locus of $\P(q_0, \ldots, q_n)$ to be 0-dimensional, so that the generic hypersurface would not meet the singularities.

\begin{prop}\label{p1}
    $\P(q_0, \ldots, q_n)$ have at worst isolated singularities if and only if weights $q_0, \ldots, q_n$ are pairwise coprime.
\end{prop}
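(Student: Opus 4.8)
The plan is to use the toric description of $\widetilde{\P}^n = \P(q_0,\ldots,q_n)$ from the preceding lemma, where the fan $\Sigma_Q$ has rays generated by $v_0 = -\tfrac{1}{q_0}\sum e_i$ and $v_i = \tfrac{e_i}{q_i}$ inside the lattice $N_Q$ generated by these vectors. The key general fact is that a simplicial toric variety is smooth along the orbit corresponding to a cone $\sigma$ precisely when the primitive generators of the rays of $\sigma$ form part of a $\Z$-basis of $N_Q$; equivalently, the associated affine chart $\mathrm{Spec}\,\C[\sigma^\vee \cap M_Q]$ is smooth iff those primitive generators span a sublattice of the correct rank as a direct summand. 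Since every cone here is simplicial (the fan is that of a weighted projective space), the singular locus is the union of the torus orbits attached to those cones whose ray generators fail this condition, and one checks smoothness cone by cone.

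First I would fix, for each subset $J \subsetneq \{0,\ldots,n\}$, the cone $\sigma_J$ spanned by $\{v_j : j \in J\}$, and compute the primitive generator $u_j$ of each ray $\R_{\geq 0} v_j$ in the lattice $N_Q$. A short computation identifies $N_Q$ explicitly (it contains $N$ with index related to the $q_i$, and the $v_j$ are rational multiples of lattice vectors), and one finds that the primitive generator along $\R_{\geq 0}v_i$ is essentially $e_i$ up to the arithmetic of the $q_i$; the orbit $O(\sigma_J)$ has dimension $n - |J|$. Then I would show: the chart of $\sigma_J$ is smooth iff the primitive generators $\{u_j : j\in J\}$ extend to a basis of $N_Q$, and translate this into a divisibility/coprimality statement among the weights $\{q_j\}$. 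The clean outcome to aim for is that the maximal cones (giving the torus-fixed points) corresponding to an $n$-element subset $J = \{0,\ldots,n\}\setminus\{k\}$ contribute a cyclic quotient singularity of order $\tfrac{1}{q_k}$-type governed by $\gcd$'s of the complementary weights, while lower-dimensional cones $\sigma_J$ with $|J| = n - m$ give a singular orbit of dimension $m > 0$ exactly when $\gcd(q_j : j \notin J) > 1$ for that subset — or more precisely when some pair $q_a, q_b$ with $a,b \notin J$ share a common factor.

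Carrying this out, the proposition falls out as follows. If the $q_i$ are pairwise coprime, then for every cone $\sigma_J$ with $|J| \leq n-1$ (so the orbit has dimension $\geq 1$) the complementary index set has at least two elements, whose weights are coprime, and the corresponding chart is smooth; only the $n$-dimensional cones can be singular, and these correspond to the finitely many torus-fixed points — hence the singular locus is $0$-dimensional. Conversely, if some pair $q_a, q_b$ has $\gcd(q_a,q_b) = g > 1$, I would exhibit the cone $\sigma_J$ with $J = \{0,\ldots,n\}\setminus\{a,b\}$ and show its primitive ray generators do not extend to a basis of $N_Q$ (the obstruction being exactly the factor $g$), so the associated orbit — of dimension $1$ — lies in the singular locus, which is therefore positive-dimensional.

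The main obstacle I anticipate is bookkeeping the lattice $N_Q$ and the primitive generators correctly: the $v_i$ are not lattice points of the standard lattice $N$, and $N_Q$ is a specific overlattice, so one must be careful computing which integer combination of the $v_j$ first lands in $N_Q$ and whether the resulting primitive vectors form a direct summand. Once the primitive generators and the index $[N_Q : \langle \text{chosen generators}\rangle + (\text{complement})]$ are pinned down, the smoothness criterion for simplicial cones makes both directions essentially a gcd computation; I would streamline this by noting that, by symmetry of the construction, it suffices to analyze the cone omitting a single index (for the fixed points) and the cone omitting a coprime-sharing pair (for the converse), rather than all subsets.
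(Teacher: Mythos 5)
Your proposal follows essentially the same route as the paper: using the fan description of $\P(q_0,\ldots,q_n)$, checking cone-by-cone smoothness via the criterion that primitive ray generators extend to a basis of $N_Q$, noting that a common factor $g>1$ of some pair $q_a,q_b$ makes the $(n-1)$-dimensional cone omitting $v_a,v_b$ non-smooth (hence a positive-dimensional singular orbit), and that pairwise coprimality leaves only the $n$-dimensional cones, i.e.\ torus-fixed points, as possibly singular. This matches the paper's argument, which is stated at the same level of detail, so the proposal is fine.
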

\begin{proof}
    If there are $q_k, q_l$ with $gcd(q_k, q_l)>1$ then the $n-1$-dimensional convex cone spanned by $\langle v_0,\ldots, \check v_k, \ldots, \check v_l, \ldots, v_n \rangle$ is non-smooth. Therefore, the singularity has positive dimension. If there are no such pairs, then the only non-smooth cones are of dimension $n$. 
\end{proof}

But this condition is not enough. For some degrees, any hypersurface of this degree will pass through a singular point of $\widetilde{\P}^n$.

\begin{example}
    Let $X$ be a hypersurface of degree 3 in $\widetilde{\P}(1,1,1,2)$. In homogeneous coordinates it is given by equation 
    
    \begin{equation*}
        F(x_0,x_1,x_2) + x_3 \cdot G(x_0,x_1,x_2)=0
    \end{equation*}
    where $F, G$ are homogeneous polynomials of degrees 3 and 1, respectively. The unique singular point of $\widetilde{\P}(1,1,1,2)$ is $(0:0:0:1)$. Both $F(0,0,0)$ and $G(0,0,0)$ equal zero and therefore 
    \begin{equation*}
        F(0,0,0) + x_3 \cdot G(0,0,0)=0
    \end{equation*}
    so any hypersurface of degree 3 passes through the singular point.
\end{example}

\begin{prop}\label{p2}
    For the surface not to pass through the singularities of $\widetilde{\P}^n$, the degree should allow monomials $x_i^k$ for each non-trivial $q_i$. In other words, the degree should be divided by all weights $q_i$.
\end{prop}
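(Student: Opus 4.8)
The plan is to analyze what happens at each singular point of $\widetilde{\P}^n$ torus-invariantly. Recall from Proposition \ref{p1} that when the weights are pairwise coprime, the singularities are isolated and, in fact, are precisely the coordinate points $P_i = (0:\cdots:0:1:0:\cdots:0)$ (the $1$ in the $i$-th slot) for those $i$ with $q_i > 1$; this is visible from the fan description, since the non-smooth maximal cones are exactly the $n$-dimensional cones $\langle v_0,\ldots,\check v_i,\ldots,v_n\rangle$ whose associated affine chart contains $P_i$. So it suffices to show: a general degree-$d$ hypersurface avoids $P_i$ if and only if $q_i \mid d$.

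The key step is to identify which monomials of weighted degree $d$ are nonvanishing at $P_i$. A monomial $x_0^{a_0}\cdots x_n^{a_n}$ has weighted degree $\sum_j q_j a_j = d$, and it is nonzero at $P_i = (0:\cdots:1:\cdots:0)$ precisely when $a_j = 0$ for all $j \neq i$, i.e.\ when it is a pure power $x_i^{a_i}$ with $q_i a_i = d$. Hence the linear system of degree-$d$ hypersurfaces contains a member not passing through $P_i$ if and only if the monomial $x_i^{d/q_i}$ occurs, which happens exactly when $q_i \mid d$. If $q_i \nmid d$, every weighted-homogeneous polynomial of degree $d$ has all its monomials vanishing at $P_i$, so \emph{every} hypersurface of degree $d$ contains $P_i$ — forcing $X$ to be singular there (or at least to meet the singular locus). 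Conversely, if $q_i \mid d$ for every $i$, then for each singular point $P_i$ the monomial $x_i^{d/q_i}$ is available, so the general member of the linear system has nonzero $x_i^{d/q_i}$-coefficient and misses $P_i$; since there are only finitely many such points, the general hypersurface avoids all of them simultaneously.

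The main (minor) obstacle is the bookkeeping needed to make ``general'' precise: one should observe that for each $i$ the condition ``$x_i^{d/q_i}$-coefficient is nonzero'' is a nonempty Zariski-open condition on the space of defining polynomials, and a finite intersection of nonempty opens is nonempty, so a general $X$ works for all singular points at once. One should also note the degenerate case where some $q_i = 1$: then $P_i$ is a smooth point of $\widetilde{\P}^n$ and imposes no constraint, consistent with the phrasing ``for each non-trivial $q_i$'' (and $q_i = 1$ trivially divides $d$). No deeper input is required; the statement is essentially the observation that the only monomials surviving at a coordinate point are pure powers, combined with the identification of the singular locus with the coordinate points of nontrivial weight.
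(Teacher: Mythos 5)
Your argument is correct and is essentially the approach the paper intends: the paper states this proposition without a formal proof, offering only the preceding example (degree $3$ in $\P(1,1,1,2)$), and your proof is exactly the general form of that observation — the only degree-$d$ monomials not vanishing at a coordinate point $P_i$ are the pure powers $x_i^{d/q_i}$, which exist precisely when $q_i \mid d$. Your identification of the singular locus with the coordinate points of nontrivial weight (valid under the pairwise-coprimality of Proposition \ref{p1}) and the genericity bookkeeping via finitely many nonempty open conditions are both sound.
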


Some of those smooth hypersurfaces happen to be Fano.

\begin{definition}
     A \textit{Fano manifold} is a smooth projective variety whose anticanonical bundle is ample.
\end{definition}

\begin{lem}[\cite{kollar2001}]\label{p3}
    Let $X$ be a hypersurface of degree $d$ in weighted projective space $\widetilde{\P}^n(q_0, \ldots, q_n)$ which has at worst isolated singularities. It is Fano if and only if $d < \sum q_i$.
\end{lem}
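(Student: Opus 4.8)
The plan is to identify the anticanonical sheaf of $X$ by adjunction from the (well-known) canonical sheaf of the ambient weighted projective space, and then to note that ampleness of a fixed power of $\mathcal{O}_X(1)$ reduces the Fano condition to a single numerical inequality. Since being Fano presupposes that $X$ is a smooth projective variety, and (by Propositions \ref{p1} and \ref{p2}) a general hypersurface of degree $d$ is smooth and misses the isolated singular locus of $\widetilde{\P}^n$ precisely when $d$ is divisible by all the $q_i$, I will work in that setting: $\widetilde{\P}^n$ has finite singular locus $S$, $X$ is smooth, and $X \subset U := \widetilde{\P}^n \setminus S$.

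First I would record the canonical sheaf of $\widetilde{\P}^n = \P(q_0,\dots,q_n)$. Viewing it as the simplicial toric variety with rays $v_0,\dots,v_n$ as in the lemma of \cite{rossi2013weightedprojectivespacestoric} above, the canonical Weil divisor is $K_{\widetilde{\P}^n} = -\sum_{i=0}^n D_i$, where $D_i = \{x_i = 0\}$ is the torus-invariant prime divisor dual to $v_i$. As $x_i$ is a global section of the reflexive sheaf $\mathcal{O}_{\widetilde{\P}^n}(q_i)$ cutting out $D_i$, one has $\mathcal{O}_{\widetilde{\P}^n}(D_i) \cong \mathcal{O}_{\widetilde{\P}^n}(q_i)$, hence $\omega_{\widetilde{\P}^n} \cong \mathcal{O}_{\widetilde{\P}^n}\!\left(-\sum_{i=0}^n q_i\right)$ as reflexive sheaves (equivalently, this follows from the weighted Euler sequence). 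I would also recall that $\mathcal{O}_{\widetilde{\P}^n}(1)$ is an ample $\Q$-Cartier divisor class: a suitable multiple $\mathcal{O}_{\widetilde{\P}^n}(m)$ is very ample and gives the projective embedding.

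Next I would pass to $X$. On the smooth quasi-projective variety $U$ every Weil divisor is Cartier, so $\mathcal{O}_U(1)$ — the restriction of the reflexive sheaf $\mathcal{O}_{\widetilde{\P}^n}(1)$, hence reflexive of rank one — is an honest line bundle, and $X$ is an effective Cartier divisor in $U$ cut out by the restriction of its defining polynomial, a section of the line bundle $\mathcal{O}_U(d)$; thus $\mathcal{O}_U(X) \cong \mathcal{O}_U(d)$. The adjunction formula for the smooth divisor $X$ in the smooth variety $U$ then gives $\omega_X \cong \omega_U|_X \otimes \mathcal{O}_X(X) \cong \mathcal{O}_X\!\left(d - \sum_{i=0}^n q_i\right)$, using $\omega_U = \omega_{\widetilde{\P}^n}|_U$ from the previous paragraph. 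Finally, $\mathcal{O}_X(1)$ is ample: for $m$ as above $\mathcal{O}_X(m) = \mathcal{O}_{\widetilde{\P}^n}(m)|_X$ is the restriction of a very ample line bundle to a closed subvariety, hence very ample, so $\mathcal{O}_X(1)$ is an ample line bundle on the smooth projective variety $X$. Consequently $\omega_X^{\vee} \cong \mathcal{O}_X\!\left(\sum q_i - d\right)$ is ample exactly when the integer $\sum_{i=0}^n q_i - d$ is positive (if it is $0$ the sheaf is trivial, if negative it is the dual of an ample line bundle), i.e. $X$ is Fano if and only if $d < \sum q_i$.

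The step needing the most care is the bookkeeping between Weil and Cartier divisors on the singular ambient space: adjunction as used above is only valid for a smooth divisor in a smooth variety. Confining the entire computation to the smooth locus $U$ — which is legitimate precisely because the divisibility hypothesis forces $X$ to avoid the singularities — is what makes the argument rigorous, and it is the only real subtlety; everything else is the standard canonical-bundle-of-a-hypersurface computation transported from $\P^n$ to the weighted setting.
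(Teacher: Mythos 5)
The paper offers no argument for this lemma at all --- it is quoted from \cite{kollar2001} --- so your proposal is not a variant of the paper's proof but a proof supplied from scratch, and it is the standard one: compute $\omega_{\widetilde{\P}^n}\cong\mathcal{O}(-\sum q_i)$ torically, restrict to the smooth locus $U$ where $\mathcal{O}(1)$ is genuinely invertible, apply adjunction to the smooth Cartier divisor $X\subset U$ to get $\omega_X\cong\mathcal{O}_X(d-\sum q_i)$, and use ampleness of $\mathcal{O}_X(1)$ (via a very ample multiple $\mathcal{O}(m)$) to turn the Fano condition into $d<\sum q_i$. The argument is correct in the setting in which the paper actually invokes the lemma, namely a general hypersurface with $d$ divisible by all the pairwise coprime weights, so that $X$ is smooth and misses the finite singular locus; your insistence on working entirely inside $U$ is exactly the right way to keep adjunction and the Weil/Cartier bookkeeping honest, and pairwise coprimality guarantees well-formedness, so $\mathcal{O}(D_i)\cong\mathcal{O}(q_i)$ and $\mathcal{O}_U(a)\otimes\mathcal{O}_U(b)\cong\mathcal{O}_U(a+b)$ hold as you use them. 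The one caveat is scope: Koll\'ar's statement, as cited, covers quasi-smooth hypersurfaces that may pass through the singular points of $\widetilde{\P}^n$ (with ``Fano'' understood in the possibly singular, $\Q$-Gorenstein sense), and there your adjunction-on-the-smooth-locus argument does not literally apply --- one would instead invoke adjunction for Weil divisor classes on the normal $\Q$-factorial ambient space. Since the paper's definition of Fano requires smoothness and the lemma is only ever combined with Propositions \ref{p1} and \ref{p2}, this restriction costs nothing here, but it is worth stating explicitly that you are proving the lemma under those additional hypotheses rather than in Koll\'ar's generality.
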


Let us now look at some examples. For 3-dimensional hypersurfaces in 4-dimensional weighted projective space combining Lemmas \ref{p1}, \ref{p2} and \ref{p3} we are left with 5 hypersurfaces: degree 2 and 4 in $\P(1,1,1,1,2)$, degree 3 and 6 in $\P(1,1,1,1,3)$, and degree 6 in $\P(1,1,1,2,3)$.
Note that the hypersurface of degree 2 in $\P(1,1,1,1,2)$ and the hypersurface of degree 3 in $\P(1,1,1,1,3)$ are just $\P^3$. So there are exactly three Fano threefolds of such form.

 
%

\section{Some homological algebra}

To overcome difficulties related to the singularities of the weighted projective space, we will use sheaf cohomology. We quote lemmas and adapt the notations from Dimca's "Sheaves in topology" \cite[chapter 1.4]{dimca}.

Let $\mathcal{A}$ be an abelian category. 
Then let $C(\mathcal{A})$ be the category of complexes, $K(\mathcal{A})$ be its homotopy category, $D(\mathcal{A})$ be the derived category. 

Let $X^{\bullet},Y^{\bullet} \in D(\mathcal{A})$. Denote by $X^{\bullet}[n]$ the shift by $n$, in other words $X^k[n] = X^{k+n}$ 

\begin{lem}\label{1.4.3}
     Then $\Ext^{n}(X^{\bullet},Y^{\bullet}) \simeq Hom_{D(\mathcal{A})}(X^{\bullet},Y^{\bullet}[n])$ for all integers $n$.
\end{lem}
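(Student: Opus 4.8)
The plan is to recall that the hyper-Ext group $\Ext^n(X^\bullet, Y^\bullet)$ is, by definition, the $n$-th cohomology of the total Hom-complex computing $\RR\Hom(X^\bullet, Y^\bullet)$, and then to identify this cohomology with morphisms in $D(\mathcal{A})$. First I would replace $Y^\bullet$ by a quasi-isomorphic bounded-below complex of injectives $I^\bullet$ (using that $\mathcal{A}$ has enough injectives, as holds for the sheaf categories to which we apply this), so that $\RR\Hom(X^\bullet, Y^\bullet)$ is represented by the honest complex $\Hom^\bullet(X^\bullet, I^\bullet)$ whose degree-$m$ term is $\prod_k \Hom(X^k, I^{k+m})$ with the usual differential $df = d_I \circ f - (-1)^m f \circ d_X$.

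Next I would carry out the bookkeeping identifying $H^n\big(\Hom^\bullet(X^\bullet, I^\bullet)\big)$ with $\Hom_{K(\mathcal{A})}(X^\bullet, I^\bullet[n])$: a degree-$n$ cocycle of $\Hom^\bullet(X^\bullet, I^\bullet)$ is precisely a family of maps $X^k \to I^{k+n}$ commuting with the differentials, i.e.\ a chain map $X^\bullet \to I^\bullet[n]$, while a degree-$n$ coboundary is exactly a null-homotopic such map. This gives $\Ext^n(X^\bullet, Y^\bullet) \cong \Hom_{K(\mathcal{A})}(X^\bullet, I^\bullet[n])$.

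Finally I would promote homotopy classes to morphisms in the derived category. Since $I^\bullet$ is a bounded-below complex of injectives, the localization functor $K(\mathcal{A}) \to D(\mathcal{A})$ induces an isomorphism $\Hom_{K(\mathcal{A})}(X^\bullet, I^\bullet[n]) \xrightarrow{\ \sim\ } \Hom_{D(\mathcal{A})}(X^\bullet, I^\bullet[n])$; and since the quasi-isomorphism $Y^\bullet \to I^\bullet$ is an isomorphism in $D(\mathcal{A})$, compatibly with the shift functor, we get $\Hom_{D(\mathcal{A})}(X^\bullet, I^\bullet[n]) \cong \Hom_{D(\mathcal{A})}(X^\bullet, Y^\bullet[n])$. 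Composing the three identifications yields the lemma.

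The main obstacle is the passage from $\Hom_{K(\mathcal{A})}(X^\bullet, I^\bullet[n])$ to $\Hom_{D(\mathcal{A})}(X^\bullet, I^\bullet[n])$: it relies on the fact that every morphism in $D(\mathcal{A})$ out of $X^\bullet$ into a bounded-below injective complex is represented by a genuine chain map — no "roof" with a quasi-isomorphism in the denominator is needed — and that two chain maps into such a complex which agree in $D(\mathcal{A})$ are already chain-homotopic. This is the standard computation of derived-category Hom-sets via injective resolutions, and it is precisely where the hypotheses (enough injectives, suitable boundedness so that the resolution exists) are used; the remaining steps are formal unwinding of definitions.
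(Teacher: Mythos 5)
Your argument is correct and is essentially the argument behind the paper's citation: the paper states this lemma without proof, quoting Dimca's Chapter~1.4, where $\Ext^n$ is computed exactly as you do, by taking a bounded-below injective resolution of $Y^\bullet$, identifying $H^n(\Hom^\bullet(X^\bullet,I^\bullet))$ with $\Hom_{K(\mathcal{A})}(X^\bullet,I^\bullet[n])$ (the paper's Lemma~\ref{1.4.1}), and then using that homotopy classes of maps into a bounded-below injective complex coincide with derived-category morphisms. The only caveat worth recording is that this argument needs $\mathcal{A}$ to have enough injectives and $Y^\bullet$ to lie in $D^+(\mathcal{A})$ (or else a K-injective resolution for unbounded complexes), which is satisfied in every application made in the paper.
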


\begin{lem}\label{1.4.1}
    $H^n(Hom^\bullet(X^\bullet,Y^\bullet)) \simeq Hom_{K(\mathcal{A})}(X^\bullet,Y^\bullet[n])$ for all integers n.
\end{lem}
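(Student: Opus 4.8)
The plan is essentially to unwind the definitions on both sides and match them term by term; the only thing requiring care is the choice of signs, and there is no deeper obstacle. Recall that $\Hom^\bullet(X^\bullet,Y^\bullet)$ denotes the total Hom complex: its term in degree $n$ is $\Hom^n(X^\bullet,Y^\bullet)=\prod_{k\in\Z}\Hom(X^k,Y^{k+n})$, so that a degree-$n$ element is a family $f=(f^k)_k$ of morphisms $f^k\colon X^k\to Y^{k+n}$ in $\mathcal{A}$, and the differential is $(Df)^k=d_Y\circ f^k-(-1)^n f^{k+1}\circ d_X$. First I would write this out explicitly, together with the convention that the shifted complex $Y^\bullet[n]$ carries the differential $(-1)^n d_Y$ in each degree, since compatibility of these two sign conventions is exactly what makes the identification work.

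Next I would identify the $n$-cocycles. The condition $(Df)^k=0$ for all $k$ reads $d_Y\circ f^k=(-1)^n f^{k+1}\circ d_X$, which is precisely the statement that the family $f$, regarded as a collection of maps $X^k\to (Y^\bullet[n])^k$, commutes with the differentials; hence $Z^n\bigl(\Hom^\bullet(X^\bullet,Y^\bullet)\bigr)=\Hom_{C(\mathcal{A})}(X^\bullet,Y^\bullet[n])$. Then I would identify the $n$-coboundaries: $f=Dh$ for some $h=(h^k)_k$ with $h^k\colon X^k\to Y^{k+n-1}$ means $f^k=d_Y\circ h^k+(-1)^n h^{k+1}\circ d_X$ for all $k$, which (after absorbing a sign $(-1)^n$ into $h$) is exactly the condition that $f$, viewed as a morphism $X^\bullet\to Y^\bullet[n]$, is chain-homotopic to zero. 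Thus $B^n\bigl(\Hom^\bullet(X^\bullet,Y^\bullet)\bigr)$ is precisely the subgroup of null-homotopic morphisms.

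Finally, passing to the quotient, $H^n\bigl(\Hom^\bullet(X^\bullet,Y^\bullet)\bigr)=Z^n/B^n$ is the group of morphisms of complexes $X^\bullet\to Y^\bullet[n]$ modulo chain homotopy, which is by definition $\Hom_{K(\mathcal{A})}(X^\bullet,Y^\bullet[n])$. The main (and only) obstacle is bookkeeping: one must fix the signs in the differential of $\Hom^\bullet$ and in the shift functor consistently, and note that replacing the product in the definition of $\Hom^n$ by a direct sum does not affect anything, since all the equations above are verified degreewise in the index $k$. Nothing deeper enters here; this is the elementary computation that in turn feeds into the derived-category statement of Lemma \ref{1.4.3}.
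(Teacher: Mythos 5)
Your proof is correct and is exactly the standard cocycle/coboundary computation that the paper delegates to Dimca: $n$-cocycles of $\Hom^\bullet(X^\bullet,Y^\bullet)$ are the chain maps $X^\bullet\to Y^\bullet[n]$, $n$-coboundaries are the null-homotopic ones, and your sign bookkeeping (differential $(Df)^k=d_Y\circ f^k-(-1)^n f^{k+1}\circ d_X$ versus shift differential $(-1)^n d_Y$) is consistent. One caveat: drop the aside that the product in $\Hom^n$ could be replaced by a direct sum — for unbounded complexes the direct sum would only capture morphisms with finitely many nonzero components, so the product is the correct (and the one you actually use) definition.
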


Denote the category of modules on the ring $A$ as $mod(A)$. We will say that a complex $X^{\bullet} \in C(mod(A))$ is free (injective) if every object in it is free (injective). 

\begin{condition}\label{cond}
    Assume that either:
\begin{enumerate}
    \item $X^\bullet\in C^-(mod(A))$ and $Y^\bullet\in C^+(mod(A))$, or

    \item $X^\bullet\in C(mod(A))$ and $Y^\bullet\in C^b(mod(A))$, or

    \item one of (1) or (2) above holds when we interchange $X^\bullet$ and $Y^\bullet.$
\end{enumerate}
\end{condition}

\begin{thm}[Universal coefficients]\label{uc}

    Let $A$ be a principal ideal domain. 
    
    Let $X^{\bullet}, Y^{\bullet} \in C(mod(A))$ be complexes satisfying Condition \ref{cond}. If moreover $X^{\bullet}$ is free or $Y^{\bullet}$ is injective, then for any integer $m \in \Z$ we have the following short exact sequence:
    
\begin{multline*}
    0 \to \bigoplus_{q - p = m - 1} \Ext(H^p(X^\bullet), H^q(Y^\bullet)) \to  H^m(\Hom^\bullet(X^\bullet, Y^\bullet)) \to \\
    \bigoplus_{q - p = m} \Hom(H^p(X^\bullet), H^q(Y^\bullet)) \to 0.
\end{multline*}

In particular, if $ A \to Y^\bullet$ is a bounded injective resolution, then we have for any complex $X^\bullet$ and any $m \in \Z$ the following exact sequence:

\begin{equation*}
    0 \to \Ext^1(H^{m+1}(X^\bullet), A) \to H^{-m}(\Hom^\bullet(X^\bullet, Y^\bullet)) \to \Hom(H^m(X^\bullet), A) \to 0.
\end{equation*}  

\end{thm}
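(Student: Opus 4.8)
The plan is to prove the general short exact sequence first and then read off the displayed special case. The conceptual heart is that $A$, being a principal ideal domain, is hereditary, so $\Ext^j_A$ vanishes for $j\ge 2$; consequently the hyper-$\Ext$ of the two complexes is built out of nothing but $\Hom$ and $\Ext^1$ of their cohomology modules, and a two-step filtration produces exactly the asserted sequence. I would carry this out along the derived-category route for which Lemmas \ref{1.4.1} and \ref{1.4.3} are designed, and use an elementary resolution argument to cover the corner cases of Condition \ref{cond}.

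\emph{Step 1: translate the left-hand side into hyper-$\Ext$.} The role of the hypothesis that $X^\bullet$ is free or $Y^\bullet$ is injective, together with the boundedness in Condition \ref{cond}, is that the complex computing $\Hom^\bullet(X^\bullet,Y^\bullet)$ is K-projective in the first variable (a bounded-above complex of projectives) or K-injective in the second (a bounded-below complex of injectives); hence $\Hom_{K(\mathcal A)}(X^\bullet,Y^\bullet[m]) = \Hom_{D(\mathcal A)}(X^\bullet,Y^\bullet[m])$, and combining this with Lemmas \ref{1.4.1} and \ref{1.4.3} yields a natural identification $H^m(\Hom^\bullet(X^\bullet,Y^\bullet)) \cong \Ext^m(X^\bullet,Y^\bullet) := \Hom_{D(\mathcal A)}(X^\bullet,Y^\bullet[m])$. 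It then suffices to analyse this hyper-$\Ext$ group.

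\emph{Step 2: formality over a hereditary ring, then assemble.} The key lemma I would establish is that over $A$ every complex $K^\bullet$ with the relevant boundedness is isomorphic in $D(\mathrm{mod}\,A)$ to $\bigoplus_p H^p(K^\bullet)[-p]$. This follows by splitting the canonical-truncation triangles $\tau_{\le n}K^\bullet \to \tau_{\le n+1}K^\bullet \to H^{n+1}(K^\bullet)[-n-1] \xrightarrow{\delta} \tau_{\le n}K^\bullet[1]$: the obstruction $\delta$ lies in a group assembled from the $\Ext^j_A\bigl(H^{n+1}(K^\bullet),H^i(K^\bullet)\bigr)$ with $j=n+2-i\ge 2$, which vanish, so each triangle splits, and induction (starting from the boundedness on one side) gives the decomposition. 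Feeding the decompositions of $X^\bullet$ and $Y^\bullet$ into $\Hom_{D(\mathcal A)}(-,-)$ and using $\Hom_{D(\mathcal A)}\bigl(M[-p],N[m-q]\bigr) = \Ext^{m+p-q}_A(M,N)$, the only surviving summands of $\Ext^m(X^\bullet,Y^\bullet)$ are those with $q-p=m$, contributing $\Hom(H^p(X^\bullet),H^q(Y^\bullet))$, and those with $q-p=m-1$, contributing $\Ext^1(H^p(X^\bullet),H^q(Y^\bullet))$ (written $\Ext$ in the statement); the projection onto the first family is the natural "pass to cohomology" map and its kernel is the second family, which is precisely the claimed exact sequence (in fact split, though not naturally). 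For the displayed special case, take $Y^\bullet$ to be a bounded injective resolution of $A$, so that $H^q(Y^\bullet)=A$ for $q=0$ and is $0$ otherwise; in degree $k$ the general sequence then has $\Ext^1$-term $\Ext^1(H^{1-k}(X^\bullet),A)$ and $\Hom$-term $\Hom(H^{-k}(X^\bullet),A)$, and substituting $k=-m$ recovers the stated sequence.

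The step I expect to be the main obstacle — indeed the only genuinely delicate point — is the bookkeeping around boundedness and infinite (co)products: $\Hom_{D(\mathcal A)}$ out of an infinite direct sum is a product and into an infinite direct sum is a sum only under a boundedness hypothesis, so one must check that for each fixed $m$ only finitely many pairs $(p,q)$ contribute (which is exactly what Condition \ref{cond} guarantees) and that the identification $\Hom_K = \Hom_D$ in Step 1 is legitimate in the relevant case. In the corner cases of Condition \ref{cond} not cleanly covered by the derived route — an unbounded complex of free modules paired with a bounded complex, and its mirror — I would instead argue by hand: since submodules of free modules over a PID are free, the short exact sequences $0\to Z^p\to X^p\to B^{p+1}\to 0$ of cycles, chains, and boundaries split, giving a degreewise-split short exact sequence of complexes $0\to Z^\bullet\to X^\bullet\to B^\bullet[1]\to 0$ in which $Z^\bullet$ and $B^\bullet$ carry zero differentials; applying the exact functor $\Hom^\bullet(-,Y^\bullet)$ and taking the long exact cohomology sequence, one identifies the connecting homomorphism with restriction along the inclusions $B^p\hookrightarrow Z^p$ and then feeds in the four-term exact sequence obtained by applying $\Hom(-,H^q(Y^\bullet))$ to $0\to B^p\to Z^p\to H^p(X^\bullet)\to 0$; the mirror case (with $Y^\bullet$ injective) is dual, resolving by the cocycle/coboundary subcomplexes of $Y^\bullet$. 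The delicate points there are the identification of the connecting map and, again, commuting cohomology past the formation of the $\Hom$-complex, both controlled by the boundedness in Condition \ref{cond}.
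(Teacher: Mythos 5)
The paper offers no proof of Theorem \ref{uc} at all: it is quoted (with Condition \ref{cond} and the surrounding lemmas) from Dimca's \emph{Sheaves in Topology}, Section 1.4, so there is no in-paper argument to compare yours against; I can only assess your proposal on its own terms. Its overall architecture is sound. Your by-hand treatment of the free case is exactly the classical argument (over a PID the cycles and boundaries of a complex of free modules are free, so $0\to Z^\bullet\to X^\bullet\to B^\bullet[1]\to 0$ is degreewise split; the long exact sequence plus $\Hom(-,H^q(Y^\bullet))$ applied to $0\to B^p\to Z^p\to H^p(X^\bullet)\to 0$ gives the claimed sequence), the formality-over-a-hereditary-ring route handles the bounded cases, the finiteness bookkeeping you flag is genuine but routine under Condition \ref{cond}, and the deduction of the displayed special case from the general sequence is correct.

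The one step that would fail as written is the ``mirror'' by-hand case. You cannot dualize the cycle/boundary splitting verbatim: for a complex of injectives $Y^\bullet$ over a PID the cocycle submodules $Z^q(Y^\bullet)$ are in general \emph{not} injective (think of $\Z\subset\Q$), so the sequence $0\to Z^\bullet(Y)\to Y^\bullet\to B^\bullet(Y)[1]\to 0$ need not be degreewise split and need not stay exact after applying $\Hom^\bullet(X^\bullet,-)$ for an arbitrary $X^\bullet$. What is true over a hereditary ring is that the coboundary modules $B^{q+1}(Y^\bullet)$, being quotients of injectives, are injective; hence each $B^{q+1}$ splits off from $Y^{q+1}$, and $Y^\bullet$ decomposes \emph{as a complex} into a (finite, by boundedness) direct sum of two-term complexes of injectives, each a shifted injective resolution of $H^q(Y^\bullet)$; this reduces the injective case to module coefficients, which is precisely what the special case needs. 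Alternatively, and more in the spirit of your Step 1: a bounded complex of injectives is K-injective, so $H^m(\Hom^\bullet(X^\bullet,Y^\bullet))\simeq\Hom_{D}(X^\bullet,Y^\bullet[m])$ for arbitrary $X^\bullet$, and formality of an arbitrary (unbounded) complex over a hereditary ring can be proved directly by mapping $\bigoplus_p P_p^\bullet[-p]\to X^\bullet$, where $P_p^\bullet$ is a two-term projective resolution of $H^p(X^\bullet)$ lifted through $Z^p(X^\bullet)$; this quasi-isomorphism avoids both the truncation induction (which, as you concede, needs boundedness on one side and would otherwise require a homotopy-limit argument) and the faulty dualization. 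With either repair your proof is complete.
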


\section{Cohomology with supports in a closed subvariety}

Let $Z$ be a closed subvariety of $X$ and denote $i\colon Z \to X$ the corresponding closed immersion. Then one can define cohomology with supports in $Z$ (sometimes called \textit{local} cohomology). 

\begin{definition}
    Define $\Gamma_Z (X, \mathcal{F})$ to be the set of sections of $\mathcal{F}$ with support in $Z$.
    $$ \Gamma_Z (X, \mathcal{F}) := \{ s \in \Gamma (X, \mathcal{F}) | \Supp(s) \subseteq Z  \} $$
    Then $\Gamma_Z (X, -)$ is a left exact functor. \textit{Cohomology with supports in} $Z$ and coefficients in $\mathcal{F}$, denoted by $H_Z^m(X, \mathcal{F})$, is the  $m$-th derived functor of $\Gamma_Z (X, -)$.
\end{definition}

We will soon see how cohomology with supports in $Z$ relates to the usual cohomology $H^p(Z)$. But first we need to discuss an extraordinary cup product.

\subsection{Extraordinary cup product}

For a ring $A$ denote by $A_X$ and $A_Z$ the constant sheaves on $X$ and $Z$ respectively.

Since $\Gamma (X, \mathcal{F}) \simeq \Hom (A_X, \mathcal{F})$ we have $H^m(X, \mathcal{F}) = \Ext^m(A_X, \mathcal{F})$. A similar expression holds for cohomology with supports:

\begin{lem}\label{2.4.2}
    $\Gamma_Z(X, \mathcal{F}) \simeq Hom (i_! A_Z, \mathcal{F})$ and therefore $H^m_Z(X,\mathcal{F}) \simeq \Ext^m(i_! A_Z, \mathcal{F})$
\end{lem}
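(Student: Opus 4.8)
The plan is to realize $\Gamma_Z(X,-)$ as a representable functor on the category of sheaves on $X$, using the open complement $j\colon U := X\setminus Z \hookrightarrow X$. The starting point is the tautological short exact sequence of sheaves on $X$
\[
0 \longrightarrow j_! A_U \longrightarrow A_X \longrightarrow i_! A_Z \longrightarrow 0,
\]
where $j_!$ denotes extension by zero from the open set $U$ and, for the closed immersion $i$, we use $i_! = i_*$. Applying the left exact functor $\Hom_{\mathrm{Sh}(X)}(-,\mathcal F)$ then yields the exact sequence
\[
0 \longrightarrow \Hom(i_! A_Z,\mathcal F) \longrightarrow \Hom(A_X,\mathcal F) \xrightarrow{\ \rho\ } \Hom(j_! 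A_U,\mathcal F).
\]

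Next I would identify the last two terms. The isomorphism $\Hom(A_X,\mathcal F)\simeq\Gamma(X,\mathcal F)$ is the one already recorded in the text. For the right-hand term I would invoke the adjunction $(j_!, j^*)$ for the open immersion $j$, which gives $\Hom(j_! A_U,\mathcal F)\simeq\Hom(A_U, j^*\mathcal F)\simeq\Gamma(U,\mathcal F|_U)$; under these identifications the map $\rho$ becomes restriction of sections $s\mapsto s|_U$. Consequently $\Hom(i_! A_Z,\mathcal F)=\ker\rho=\{s\in\Gamma(X,\mathcal F) : s|_{X\setminus Z}=0\}$, which is precisely the set of sections whose support is contained in $Z$, i.e. $\Gamma_Z(X,\mathcal F)$. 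This gives the first isomorphism, and it is evidently natural in $\mathcal F$.

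For the statement about cohomology I would then argue purely formally: we have exhibited a natural isomorphism of functors $\Gamma_Z(X,-)\simeq\Hom(i_! A_Z,-)$ on $\mathrm{Sh}(X)$, hence their $m$-th right derived functors are naturally isomorphic as well. By definition the former derived functor is $H^m_Z(X,-)$, while the latter is $\Ext^m(i_! A_Z,-)$ (computed via an injective resolution of the second argument), so $H^m_Z(X,\mathcal F)\simeq\Ext^m(i_! A_Z,\mathcal F)$.

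The one delicate point is the bookkeeping in the middle step: one must verify that the map induced on $\Hom$-groups really is restriction of sections, i.e. trace the adjunction isomorphisms carefully and check compatibility with the inclusion $j_!A_U\hookrightarrow A_X$; and confirm that mere left-exactness of $\Hom(-,\mathcal F)$ suffices here — which it does, since we only need to read off a kernel. Everything else is formal nonsense about derived functors of naturally isomorphic functors.
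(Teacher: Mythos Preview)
Your argument is correct and is the standard one: identify $\Gamma_Z(X,-)$ as the kernel of restriction via the exact sequence $0\to j_!A_U\to A_X\to i_!A_Z\to 0$ and the adjunction $(j_!,j^*)$, then pass to derived functors. The paper does not supply its own proof here but simply cites \cite{dimca}[Lemma 2.4.2], so there is nothing further to compare; your write-up is essentially what one finds in that reference.
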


\begin{proof}
    \cite{dimca}[Lemma 2.4.2]
\end{proof}

Composition in the derived category gives rise to a pairing called the composition product:

\begin{equation*}
    \Ext^p(N, P) \times \Ext^q(M, N) \to \Ext^{p+q}(M, P)
\end{equation*}

\begin{equation*}
    \alpha \cup \beta := \alpha [q] \circ \beta.
\end{equation*}

Due to the Lemma \ref{2.4.2} we can define the cup product in the cohomology with support as a composition product:

\begin{equation*}
    \Ext^q(A_X, A_X) \times \Ext^p(i_!A_Z, A_X) \to \Ext^{p+q}(i_!A_Z, A_X)
\end{equation*}

\begin{equation*} 
\beta \cup \gamma \in H_Z^{p+q}(X, A_X), \text{  where  } \beta \in  H^q(X, A_X), \gamma \in H_Z^q(X, A_X)  
\end{equation*}

\

We can also express cohomology of $Z$ in terms of $\Ext$ of sheaves on $X$ using the following lemma:

\begin{lem}\label{9.12}
    $H^m(Z, A_Z) = Ext^m (i_! A_Z, i_! A_Z)$
\end{lem}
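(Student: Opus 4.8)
The claim is $H^m(Z, A_Z) \simeq \operatorname{Ext}^m(i_! A_Z, i_! A_Z)$, where $i\colon Z \hookrightarrow X$ is the closed immersion. The plan is to reduce the right-hand side to ordinary sheaf cohomology on $Z$ by using the adjunction between $i_!$ and its right adjoint $i^!$ (equivalently, here, since $i$ is a closed immersion, $i_!=i_*$ and the relevant adjunction is $i_*\dashv i^!$), together with the elementary fact that $i^! i_! A_Z \simeq A_Z$ for a closed immersion. Concretely, I would argue as follows.

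First I would invoke Lemma~\ref{1.4.3} to rewrite $\operatorname{Ext}^m(i_!A_Z, i_!A_Z)$ as $\operatorname{Hom}_{D}(i_!A_Z, i_!A_Z[m])$ in the derived category of sheaves on $X$. Next, using the derived adjunction $\operatorname{Hom}_{D(X)}(i_!\mathcal{F}^\bullet, \mathcal{G}^\bullet) \simeq \operatorname{Hom}_{D(Z)}(\mathcal{F}^\bullet, i^!\mathcal{G}^\bullet)$ (with $\mathcal{F}^\bullet = A_Z$, $\mathcal{G}^\bullet = i_!A_Z[m]$), I would identify this with $\operatorname{Hom}_{D(Z)}(A_Z, i^! i_! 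A_Z[m])$. The key geometric input is that for a closed immersion $i$ the counit/unit maps give $i^! i_* \simeq \mathrm{id}$ on the derived category of sheaves on $Z$ — this is standard (e.g. because $i_*$ is fully faithful and $i^!$ is its right adjoint, or directly: sections of $i_*\mathcal{G}$ supported on $Z$ are just $\mathcal{G}$). Hence $i^! i_! A_Z \simeq A_Z$, and the expression collapses to $\operatorname{Hom}_{D(Z)}(A_Z, A_Z[m]) = \operatorname{Ext}^m_{Z}(A_Z, A_Z) = H^m(Z, A_Z)$, using the identification of sheaf cohomology with $\operatorname{Ext}$ of the constant sheaf recalled just before Lemma~\ref{2.4.2}.

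Alternatively — and this is probably the route the paper intends, to stay within the "elementary" framework of Dimca's chapter~1.4 — one can avoid explicitly invoking $i^!$ and instead compute with an injective resolution. Let $A_Z \to \mathcal{I}^\bullet$ be an injective resolution of the constant sheaf on $Z$. Then $i_!\mathcal{I}^\bullet$ need not be injective on $X$, but one checks that $\operatorname{Hom}_X(i_!A_Z, i_!\mathcal{I}^\bullet) \simeq \operatorname{Hom}_Z(A_Z, \mathcal{I}^\bullet)$ by the $(i_!, i^!)$ adjunction at the sheaf level together with $i^! i_! \mathcal{I}^j \simeq \mathcal{I}^j$; one then shows $i_!\mathcal{I}^\bullet$ computes $\operatorname{Ext}^\bullet_X(i_!A_Z, -)$ in the appropriate variable by a comparison/acyclicity argument (the functor $\operatorname{Hom}_X(i_!A_Z, -)$ sends injectives on $Z$, pushed forward, to an acyclic complex because $i_!$ of an injective is $\Gamma_Z$-acyclic). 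Taking $m$-th cohomology of $\operatorname{Hom}_Z(A_Z, \mathcal{I}^\bullet)$ gives $H^m(Z, A_Z)$.

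**Main obstacle.** The substantive point is the identification $i^! i_! A_Z \simeq A_Z$ (equivalently, full faithfulness of $i_! = i_*$ for a closed immersion) and the care needed in passing between $\operatorname{Ext}$ on $X$ and $\operatorname{Ext}$ on $Z$ when $i_!$ of an injective resolution fails to be injective. In the derived-adjunction formulation this is clean; in the explicit-resolution formulation one must verify the acyclicity statement that makes the comparison of $\delta$-functors work. Everything else — the translation between $\operatorname{Ext}$, $\operatorname{Hom}_D$, and $\operatorname{Hom}_K$, and between sheaf cohomology and $\operatorname{Ext}$ of constant sheaves — is bookkeeping already set up in the preceding lemmas.
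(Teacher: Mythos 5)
Your argument is correct; note that the paper itself gives no proof of this lemma — it simply cites \cite{iversen}[II.9.12] — so any self-contained derivation is "different" from the text by default, and yours is essentially the standard one, phrased in the same framework the paper sets up (Lemma \ref{1.4.3}, the identification of cohomology with $\Ext$ of constant sheaves, Verdier-type adjunction). Two remarks. First, the "main obstacle" you flag is not actually an obstacle for a closed immersion: $i_!=i_*$ has the exact left adjoint $i^*$, hence preserves injectives, so if $A_Z\to\mathcal{I}^\bullet$ is an injective resolution on $Z$ then $i_!\mathcal{I}^\bullet$ is already an injective resolution of $i_!A_Z$ on $X$, and
$\Ext^m_X(i_!A_Z,i_!A_Z)=H^m\bigl(\Hom_X(i_!A_Z,i_!\mathcal{I}^\bullet)\bigr)\simeq H^m\bigl(\Hom_Z(i^*i_!A_Z,\mathcal{I}^\bullet)\bigr)=H^m\bigl(\Hom_Z(A_Z,\mathcal{I}^\bullet)\bigr)=H^m(Z,A_Z)$,
with no acyclicity or comparison-of-$\delta$-functors argument needed. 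Second, for the same reason the cleaner adjunction to use in your derived-category formulation is the elementary one $i^*\dashv i_*$ together with $i^*i_*=\mathrm{id}$ (both functors are exact, so the adjunction descends to the derived category without any deriving), rather than $i_!\dashv i^!$; your route through $i^!$ does work — full faithfulness of $i_*$ indeed gives $i^!i_!A_Z\simeq A_Z$ as you say — but it invokes the Verdier-style right adjoint where an ordinary adjunction suffices.
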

\begin{proof}
    \cite{iversen}[II.9.12]
\end{proof}

Similarly, we define the extraordinary cup product:

\begin{equation*}
\beta \cup \gamma \in H_Z^{p+q}(X, \mathcal{F}), \text{  where  } \beta \in H_Z^p(X, \mathcal{F}), \gamma \in H(Z, A_Z).
\end{equation*}

\

    The two cup products are related by the following formulas:

\begin{lem}
    Let $r\colon H_Z^{\bullet}(X, A_X) \to H^{\bullet}(X, A_X)$ and $i^*\colon H(X, A_X)^{\bullet} \to H(Z, A_Z)^{\bullet}$ be the map induced by closed immersion of $Z$. For $\alpha \in H^p(X, A_X)$, $\beta \in H_Z^q(X, A_X)$, $\gamma \in H^r(Z, A_Z)$ following formulas hold:

\begin{enumerate}
    \item  \begin{equation*}
        (\alpha \cup \beta) \cup \gamma = \alpha \cup (\beta \cup \gamma)
    \end{equation*} 
    \item \begin{equation*}
         r(\alpha)\cup\beta=\alpha\cup i^*r(\beta) 
    \end{equation*}
    \item \begin{equation*}
         \alpha \cup \beta = (-1)^{pq} \beta \cup i^* \alpha 
    \end{equation*}
    \end{enumerate}

\end{lem}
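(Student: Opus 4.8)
The plan is to reinterpret all three products as instances of the composition (Yoneda) product in the derived category $D(X)$ of sheaves of $A$-modules on $X$, and then to read the identities off from the formal properties of that product together with the behaviour of the canonical morphism $u\colon A_X\to j$, where I write $j:=i_!A_Z\cong i_*A_Z$ and $u$ is the restriction map, i.e. the unit $A_X\to i_*i^*A_X$ of the $(i^*,i_*)$-adjunction. Under the identifications $H^m(X,A_X)=\Ext^m(A_X,A_X)$ and $H^m_Z(X,A_X)=\Ext^m(j,A_X)$ of Lemma \ref{2.4.2}, and $H^m(Z,A_Z)=\Ext^m(j,j)$ of Lemma \ref{9.12}, the forgetful map $r$ becomes precomposition with $u$, so $r(\beta)=\beta\circ u$; and $i^*$ becomes ``apply the exact functor $i_*i^*$, using $i_*i^*A_X=j$'', which by naturality of $u$ is pinned down by the relation $u[p]\circ\alpha=(i^*\alpha)\circ u$ for every $\alpha\in\Ext^p(A_X,A_X)$. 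A preliminary bookkeeping step is to check that $\alpha\cup\beta$, for $\alpha$ an ordinary class of degree $p$ and $\beta$ a supported class of degree $q$, is the composition product $\Ext^p(A_X,A_X)\times\Ext^q(j,A_X)\to\Ext^{p+q}(j,A_X)$, $(\alpha,\beta)\mapsto\alpha[q]\circ\beta$, and that $\beta\cup\gamma$, for $\gamma\in H^r(Z,A_Z)$, is $\Ext^q(j,A_X)\times\Ext^r(j,j)\to\Ext^{q+r}(j,A_X)$, $(\beta,\gamma)\mapsto\beta[r]\circ\gamma$.

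With this dictionary, formula (1) is immediate: unwinding both sides gives the single composite $\alpha[q+r]\circ\beta[r]\circ\gamma\colon j\to A_X[p+q+r]$, using only that shift is a functor (so $(f\circ g)[n]=f[n]\circ g[n]$ and $[a][b]=[a+b]$) and that composition in $D(X)$ is associative; no sign intervenes. For formula (2) --- where the left-hand argument must be read as a class in $H^p_Z(X,A_X)$ so that $r(\alpha)$ is defined --- the left side expands to $\alpha[q]\circ u[q]\circ\beta$ and the right side to $\alpha[q]\circ(i^*r(\beta))$, so everything reduces to the key identity $i^*r(\beta)=u[q]\circ\beta$ in $\Ext^q(j,j)$. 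Precomposing both morphisms with $u$ and invoking the naturality relation for $u$ (applied to $r(\beta)=\beta\circ u$) makes them equal in $\Ext^q(A_X,j)$; since $i_*$ is fully faithful and $i^*u=\mathrm{id}_{A_Z}$, precomposition with $u$ is an isomorphism $\Ext^q(j,j)\to\Ext^q(A_X,j)$, so the two morphisms coincide and (2) follows.

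The substantive point is formula (3), which is the relative form of the graded-commutativity of the cup product and is where the sign $(-1)^{pq}$ is produced. Here I would pass from composition products to the symmetric monoidal structure $\otimes^L$ on $D(X)$: one identifies the product of an ordinary class with a supported class with the external product $\alpha\otimes\beta$ under $A_X\otimes^L j=j$ and $A_X\otimes^L A_X=A_X$ (this introduces no sign), and, via the projection formula, identifies $i^*\alpha$ with $\alpha\otimes^L\mathrm{id}_j\colon j\to j[p]$. Then $\alpha\cup\beta$ is $(\alpha\otimes\mathrm{id})\circ(\mathrm{id}\otimes\beta)$ while $\beta\cup i^*\alpha$ is $(\mathrm{id}\otimes\beta)\circ(\alpha\otimes\mathrm{id})$, and the asserted equality is exactly the Koszul interchange rule $(\mathrm{id}\otimes\beta)\circ(\alpha\otimes\mathrm{id})=(-1)^{pq}(\alpha\otimes\mathrm{id})\circ(\mathrm{id}\otimes\beta)$ --- the factor $(-1)^{pq}$ in the statement is precisely the one cancelling the Koszul sign, so both sides reduce to $\alpha\otimes\beta$. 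Equivalently, and with less monoidal bookkeeping, one replaces $A_X$ by a differential graded resolution (the Godement resolution, or the complex of sheaves of singular cochains, which carries an associative but homotopy-commutative product) and uses the classical cochain-level homotopy --- Steenrod's $\smile_1$-product --- after checking that it preserves sections with support in $Z$; this is the route taken in \cite{iversen}. Pinning down every shift and every Koszul sign so that (3) really does fall out is the only genuine obstacle here; the rest is formal, and nothing beyond the cited material in \cite{iversen} and \cite{dimca} is required.
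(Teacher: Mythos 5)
The paper gives no argument of its own for this lemma --- its ``proof'' is the citation to \cite{iversen}[II.9] --- so your proposal is doing strictly more than the paper: it reconstructs the argument inside the derived-category formalism that the paper itself relies on elsewhere. Your dictionary is the right one and is consistent with how the paper later uses these objects (your $r(\beta)=\beta\circ u$ is exactly the paper's $r(\alpha)=\alpha\circ\pi$ in the proof of Lemma \ref{pf}); (1) is indeed immediate from associativity and functoriality of the shift, and your treatment of (2) is correct, including the observation that the formula only parses if the first argument is a supported class (a genuine imprecision in the statement) and the cancellation of $u$: precomposition with $u$ is the $(i^*,i_*)$-adjunction bijection $\Ext^q(i_!A_Z,i_!A_Z)\to\Ext^q(A_X,i_!A_Z)$ because $i_*$ is (derived) fully faithful for a closed immersion, so your naturality argument closes. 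For (3) you correctly locate the content: the sign is not formal from the Yoneda product but comes from graded commutativity, accessed either through the symmetric monoidal structure (projection-formula identification $i^*\alpha=\alpha\otimes^{L}\mathrm{id}_{i_!A_Z}$ plus Koszul interchange) or through a cochain-level homotopy; this is essentially the proof in the cited source. The one caveat is that you explicitly defer the shift-and-sign bookkeeping in (3), so that part of your write-up is an outline rather than a verification --- acceptable here, since the paper delegates precisely this point to \cite{iversen}, but it is the only place where your argument is not self-contained (the attribution of a Steenrod $\smile_1$-style argument to Iversen is also doubtful, though immaterial). In short: correct where complete, a well-motivated sketch where the paper itself only cites.
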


\begin{proof}
    \cite{iversen}[II.9]
\end{proof}

We will now prove the analogue of the projection formula.

\begin{lem}\label{pf}
    Let $\alpha \in H_Z^p(X, A_X)$ and $\beta \in H^q (X, A_X)$. Then the following formula holds:
$$ r(\alpha \cup i^* \beta) = r(\alpha) \cup \beta$$
    
\end{lem}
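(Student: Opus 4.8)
The plan is to deduce this projection-type formula from the three relations in the previous lemma, treating $r$ as the ``forget supports'' map and $i^*$ as pullback along the closed immersion. First I would start from the left-hand side $r(\alpha \cup i^*\beta)$ where $\alpha \in H_Z^p(X,A_X)$ and $\beta \in H^q(X,A_X)$, so $i^*\beta \in H^q(Z,A_Z)$ and the cup product $\alpha \cup i^*\beta$ is the extraordinary one landing in $H_Z^{p+q}(X,A_X)$. Applying $r$ brings this into ordinary cohomology $H^{p+q}(X,A_X)$.

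The key step is to rewrite $\alpha \cup i^*\beta$ using relation (3), $\alpha \cup \beta' = (-1)^{pq}\beta' \cup i^*\alpha$, but here I need the version of (3) appropriate to the pairing $H_Z^p(X,A_X) \times H^q(Z,A_Z)$; alternatively, I would use relation (2), $r(\alpha')\cup\beta = \alpha' \cup i^*r(\beta)$, in the form that expresses $r$ of an extraordinary cup product. Concretely, I expect the cleanest route is: $r(\alpha \cup i^*\beta) = r(\alpha) \cup \beta$ should follow by first noting $i^*\beta = i^*r(\beta)$ when $\beta$ already lives in $H^q(X,A_X)$ (since $r$ is the identity there — more precisely $i^*$ on $H^\bullet(X)$ factors as stated), then applying relation (2) with the roles arranged so that $r(\alpha)\cup\beta = \alpha \cup i^* r(\beta)$, and finally observing that $r$ is compatible with the two cup products, i.e. $r$ of the extraordinary product $\alpha \cup i^*r(\beta)$ equals the ordinary product $r(\alpha)\cup\beta$. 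This last compatibility is essentially the naturality of the composition product in the derived category under the map $i_!A_Z \to A_X$ that induces $r$.

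The approach in detail: I would unwind all cup products as composition products in $D(A_X)$ via Lemmas \ref{2.4.2} and \ref{9.12}. Then $r$ is induced by the canonical morphism $u\colon i_!A_Z \to A_X$ (adjunction counit), $\alpha \in \Ext^p(i_!A_Z, A_X)$, $\beta \in \Ext^q(A_X,A_X)$, and $i^*\beta \in \Ext^q(i_!A_Z, i_!A_Z)$ is $\beta$ transported along $u$, i.e. roughly $i^*\beta$ composed with $u$ recovers $\beta \circ u$. The extraordinary product $\alpha \cup i^*\beta$ is $\alpha[q] \circ i^*\beta \in \Ext^{p+q}(i_!A_Z,A_X)$, and $r(\alpha \cup i^*\beta) = (\alpha[q]\circ i^*\beta)\circ$ (the unit direction) — so I just need to check that post/pre-composing with $u$ turns $\alpha[q]\circ i^*\beta$ into $(r\alpha)[q]\circ \beta$, which is a diagram chase using that $i^*\beta$ and $\beta$ fit into a commuting square with the vertical maps $u$. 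That diagram commutes by the very definition of $i^*$ on cohomology, so the identity drops out.

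The main obstacle I anticipate is purely bookkeeping: getting the variances and the placement of $i_!A_Z$ versus $A_X$ correct in each $\Ext$ group, and making sure the ``$i^*$'' appearing in relations (2) and (3) — which is defined on $H^\bullet(X,A_X)\to H^\bullet(Z,A_Z)$ — is the same map that sends $\beta$ to the class I am calling $i^*\beta$. There is no deep content beyond relations (1)--(3) and the functoriality of composition products; the risk is a sign or a misidentified map. I would therefore present the proof as a short formal manipulation: express everything via the counit $u\colon i_!A_Z\to A_X$, cite relation (2) (or (3)) of the preceding lemma, and conclude. If a direct appeal to (2) does not immediately give the stated form, the fallback is to combine (2) and (3): write $\alpha\cup i^*\beta = (-1)^{pq}$ (something) via (3), push through $r$, and re-apply (3) in $H^\bullet(X,A_X)$ to reabsorb the sign, recovering $r(\alpha)\cup\beta$ with the correct (trivial) sign.
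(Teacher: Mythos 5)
Your proposal is correct and follows essentially the same route as the paper: the paper also unwinds both cup products as composition products via Lemmas \ref{2.4.2} and \ref{9.12}, realizes $r$ as composition with the adjunction morphism $\pi\colon I^\bullet \to i_!i^*I^\bullet$ (so $r(\alpha)=\alpha\circ\pi$), and concludes by the same short formal manipulation (phrased there through relation (3) of the preceding lemma rather than your direct naturality square, but with identical content). The only slip is directional bookkeeping, exactly the risk you flagged: for a closed immersion the canonical map is the unit $A_X\to i_!A_Z$ (not a counit $i_!A_Z\to A_X$) and $r$ is \emph{pre}composition with it, after which your commuting square $i^*\beta\circ\pi=\pi[q]\circ\beta$ gives $r(\alpha\cup i^*\beta)=\alpha[q]\circ i^*\beta\circ\pi=\alpha[q]\circ\pi[q]\circ\beta=r(\alpha)\cup\beta$ exactly as you intend.
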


\begin{proof}

Let $I^{\bullet}$ be an injective resolution of the constant sheaf $A_X$ on $X$. Then denote $J^{\bullet} = i_! i^* I^{\bullet}$ and $\pi \colon  I^{\bullet} \to J^{\bullet}$ the anjunction morphism. Note that $A_Z = i^*A_X$. Now recall that
\begin{equation*}   
    H^p_Z(X,A_X) \simeq \Ext^p(i_! A_Z, A_X) \simeq \Ext^p(i_! i^* A_X, A_X) \simeq \Ext^p(J^{\bullet}, I^{\bullet})
\end{equation*}

and that

\begin{equation*}
     H^q(X,A_X) \simeq \Ext^q(A_X, A_X) \simeq \Ext^q(I^{\bullet}, I^{\bullet})
\end{equation*}
  
With these notations the map $r\colon H_Z^{\bullet}(X, A_X) \to H^{\bullet}(X, A_X)$ is simply:

\begin{equation*}
    r(\alpha) = \alpha \circ \pi .
\end{equation*}

Also, recall that the cup product comes from composition. Then using (3) from the previous lemma twice and the anticomutativity property of the cup product, we have

\begin{equation*}
         r(\alpha \cup i^* \beta) = (-1)^{pq} r(\beta \cup \alpha) = \beta [p] \circ \alpha \circ \pi = (-1)^{pq} \beta \cup r(\alpha) = r (\alpha) \cup \beta .
\end{equation*}

\end{proof}

This lemma leads to the following commutative diagram:

\begin{center}

\begin{tikzcd}
H^{2r}(X, A) \arrow{r}{i^*} \arrow[d]              & H^{2r}(Z, A) \arrow{d} \\
H^{2n - 2r}(X, A)^{\vee} \arrow{r}{r^{\vee}} & H^{2n - 2r}_Z(X, A)^{\vee}                        
\end{tikzcd}

\end{center}

where vertical maps are induced by the ordinary and extraordinary cup products. 

\subsection{Alexander duality}

We will recall the Verdier's duality first.

\begin{thm}[Verdier's duality]
    For a continuous mapping $f\colon X \to Y$ there is an additive functor of triangulated categories $f^!\colon D^+ (Y) \to D^+ (X)$, such that 
    \begin{equation*}
        RHom^{\bullet}(Rf_!\mathcal{F}^{\bullet}, \mathcal{G}^{\bullet}) \simeq RHom^{\bullet}(\mathcal{F}^{\bullet}, f^! \mathcal{G}^{\bullet})
    \end{equation*}
    in $D^+(mod(A))$. In particular, we have

    \begin{equation*}
        Hom_{D^+(Y)}(Rf_!\mathcal{F}^{\bullet}, \mathcal{G}^{\bullet}) \simeq Hom_{D^+(X)}(\mathcal{F}^{\bullet}, f^! \mathcal{G}^{\bullet}).
    \end{equation*}
    In other words, $f^!$ is right adjoint to the $Rf_!$.
\end{thm}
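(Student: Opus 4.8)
The plan is to recognize this as the classical Verdier duality theorem and to sketch the construction of the right adjoint $f^{!}$ together with the verification of the adjunction, in the setting in which it is actually used: $X$ and $Y$ locally compact Hausdorff and $f$ of finite cohomological dimension, so that $Rf_{!}$ has bounded amplitude and carries $D^{+}(X)$ into $D^{+}(Y)$. There are three movements: first replace $Rf_{!}$ by an honestly exact functor, then define $f^{!}$ by an explicit $\Hom$-sheafification tailored to that model, then check the two displayed isomorphisms — the $RHom^{\bullet}$-statement being formal once the $\Hom$-set adjunction is established.

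First I would fix a convenient model for $Rf_{!}$. Since $f_{!}$ is exact on c-soft sheaves and preserves c-softness, and $\dim X<\infty$, the constant sheaf $A_{X}$ admits a bounded resolution $0\to A_{X}\to\mathcal{L}^{0}\to\cdots\to\mathcal{L}^{N}\to 0$ by sheaves that are simultaneously flat and c-soft. Then for any $\mathcal{F}^{\bullet}\in D^{+}(X)$ the complex $\mathcal{F}^{\bullet}\otimes_{A_{X}}\mathcal{L}^{\bullet}$ is a bounded-below c-soft resolution, so $Rf_{!}\mathcal{F}^{\bullet}\simeq f_{!}(\mathcal{F}^{\bullet}\otimes_{A_{X}}\mathcal{L}^{\bullet})$ functorially, and each functor $\mathcal{F}\mapsto f_{!}(\mathcal{F}\otimes_{A_{X}}\mathcal{L}^{j})$ on sheaves is exact and compatible with restriction to open subsets of $Y$.

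Next I would build $f^{!}$. For an injective sheaf $\mathcal{I}$ on $Y$, define a complex of sheaves $f^{!}\mathcal{I}$ on $X$, concentrated in degrees $[-N,0]$, as the sheafification of the presheaf $V\mapsto\Hom^{\bullet}_{Y}\bigl(f_{!}(j_{V!}j_{V}^{*}\mathcal{L}^{\bullet}),\,\mathcal{I}\bigr)$, where $j_{V}\colon V\hookrightarrow X$ and the differential is induced by that of $\mathcal{L}^{\bullet}$; this is designed so that a direct computation yields $\Hom_{X}(\mathcal{K},f^{!}\mathcal{I})\cong\Hom_{Y}(f_{!}\mathcal{K},\mathcal{I})$, naturally in every flat c-soft sheaf $\mathcal{K}$. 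Then, choosing a bounded-below injective resolution $\mathcal{G}^{\bullet}\to\mathcal{I}^{\bullet}$, set $f^{!}\mathcal{G}^{\bullet}:=\operatorname{Tot}(f^{!}\mathcal{I}^{\bullet})$, which lies in $D^{+}(X)$ because each $f^{!}\mathcal{I}^{p}$ sits in degrees $[-N,0]$ while $\mathcal{I}^{\bullet}$ is bounded below; the standard homotopy arguments show this is independent of the chosen resolutions and triangulated-functorial. For the adjunction, with $\mathcal{F}^{\bullet}$ bounded below and $\mathcal{G}^{\bullet}\to\mathcal{I}^{\bullet}$ as above, Step 1 and the fact that maps in $D^{+}(Y)$ into a bounded-below complex of injectives are homotopy classes of chain maps (Lemma \ref{1.4.1}) identify $\Hom_{D^{+}(Y)}(Rf_{!}\mathcal{F}^{\bullet},\mathcal{G}^{\bullet})$ with homotopy classes of maps $f_{!}(\mathcal{F}^{\bullet}\otimes_{A_{X}}\mathcal{L}^{\bullet})\to\mathcal{I}^{\bullet}$; applying the sheaf-level identity termwise to the flat c-soft sheaves $\mathcal{F}^{p}\otimes\mathcal{L}^{q}$ rewrites this as homotopy classes of maps $\mathcal{F}^{\bullet}\to f^{!}\mathcal{I}^{\bullet}$, i.e. as $\Hom_{D^{+}(X)}(\mathcal{F}^{\bullet},f^{!}\mathcal{G}^{\bullet})$, and one checks that the resulting bijection
\[
\Hom_{D^{+}(Y)}\bigl(Rf_{!}\mathcal{F}^{\bullet},\mathcal{G}^{\bullet}\bigr)\;\cong\;\Hom_{D^{+}(X)}\bigl(\mathcal{F}^{\bullet},f^{!}\mathcal{G}^{\bullet}\bigr)
\]
is natural in both variables. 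Finally, applying this with all shifts $\mathcal{G}^{\bullet}[n]$ and using Lemma \ref{1.4.3} to read $H^{n}$ of the complexes $RHom^{\bullet}$ as $\Ext^{n}=\Hom_{D}(\,\cdot\,,\,\cdot\,[n])$ promotes the adjunction-unit/counit morphism $RHom^{\bullet}(Rf_{!}\mathcal{F}^{\bullet},\mathcal{G}^{\bullet})\to RHom^{\bullet}(\mathcal{F}^{\bullet},f^{!}\mathcal{G}^{\bullet})$ to a quasi-isomorphism in $D^{+}(mod(A))$, which is the first displayed formula.

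The hard part will be Step 2: constructing $f^{!}$, proving the sheaf-level identity $\Hom_{X}(\mathcal{K},f^{!}\mathcal{I})\cong\Hom_{Y}(f_{!}\mathcal{K},\mathcal{I})$, and — crucially — showing $f^{!}$ lands in $D^{+}(X)$ rather than merely $D(X)$; the last point is exactly where finiteness of the cohomological dimension of $f$ is indispensable, via the bounded flat c-soft resolution $\mathcal{L}^{\bullet}$. Should the explicit sheafification prove cumbersome, the fallback is the conceptual route: $Rf_{!}\colon D(X)\to D(Y)$ on the full unbounded derived categories preserves small coproducts and $D(X)$ is compactly generated, so Brown representability produces the right adjoint $f^{!}$ directly, with finiteness of the cohomological dimension entering only to show that $f^{!}$ preserves boundedness below.
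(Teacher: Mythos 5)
The paper itself contains no proof of this statement: Verdier duality is recalled as a classical theorem, quoted from the standard references (it is in \cite{iversen} and \cite{dimca}), and is only used as an input to Theorem \ref{AD}, so there is no in-paper argument to compare yours against. What you have written is, in outline, the standard textbook proof (Iversen, ch.~V--VII; Kashiwara--Schapira): replace $Rf_!$ by the exact model $f_!(-\otimes_{A_X}\mathcal{L}^{\bullet})$ for a bounded flat c-soft resolution $\mathcal{L}^{\bullet}$ of $A_X$, define $f^!$ on injectives by the $\Hom_Y(f_!(\mathcal{L}^{\bullet}_V),\mathcal{I})$ recipe, and check the adjunction at the level of homotopy categories; the fallback via Brown representability is also a legitimate, more modern route. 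You are right to supply the hypotheses that the quoted statement suppresses --- local compactness and finite cohomological dimension of $f$ --- without which $f^!$ need not exist on $D^+$.

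Two places need tightening before the sketch is a proof. First, the sheaf-level identity you actually need is $\Hom_Y(f_!(\mathcal{F}\otimes_{A_X}\mathcal{L}^{q}),\mathcal{I})\simeq\Hom_X(\mathcal{F},f^{\mathcal{L}^{q}}\mathcal{I})$, natural in an \emph{arbitrary} sheaf $\mathcal{F}$, with the flat c-soft sheaf $\mathcal{L}^{q}$ appearing as a parameter of the construction; the version you state, $\Hom_X(\mathcal{K},f^!\mathcal{I})\simeq\Hom_Y(f_!\mathcal{K},\mathcal{I})$ for flat c-soft $\mathcal{K}$, applied with $\mathcal{K}=\mathcal{F}^{p}\otimes\mathcal{L}^{q}$ only produces maps out of $\mathcal{F}^{p}\otimes\mathcal{L}^{q}$, not out of $\mathcal{F}^{p}$, so the termwise rewriting to ``homotopy classes of maps $\mathcal{F}^{\bullet}\to f^!\mathcal{I}^{\bullet}$'' does not follow as phrased. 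Second, to identify those homotopy classes with $\Hom_{D^{+}(X)}(\mathcal{F}^{\bullet},f^!\mathcal{G}^{\bullet})$ you must note that each $f^{\mathcal{L}^{q}}\mathcal{I}^{p}$ is injective, so that $f^!\mathcal{I}^{\bullet}$ is a bounded-below complex of injectives; this is true for a reason already present in your setup --- $f^{\mathcal{L}^{q}}$ is right adjoint to the exact functor $\mathcal{F}\mapsto f_!(\mathcal{F}\otimes\mathcal{L}^{q})$, hence preserves injectives --- but it has to be said. With these corrections the argument closes, and the corrected termwise adjunction is in fact an isomorphism of total $\Hom^{\bullet}$-complexes, which yields the first displayed formula directly, without the detour through shifts and Lemmas \ref{1.4.1} and \ref{1.4.3}.
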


\begin{remark}
    Verdier's duality is a more general form of Poincare duality and is compatible with the cup product defined with $\Ext$ as above. 
\end{remark}

Denote $a_X: X \to pt$ and $\omega_X = a_X^!(A)$. For an orientable manifold $\omega_X \simeq A_X[n]$.

\begin{thm}[Alexander duality with non-field coefficients]\label{AD}
    Let $A$ be a principal ideal domain and $X$ be an orientable manifold of dimension $n$. And let $i\colon Z \to X$ be an inclusion of a closed subset. Then for any $m \in \Z$ one has the following exact sequence:
\begin{equation*}
     0 \to \Ext (H^{n-m+1}(Z, A_Z), A) \to H_Z^m(X, A_X) \to H^{n-m}(Z, A_Z)^{\vee} \to 0
\end{equation*}
    
\end{thm}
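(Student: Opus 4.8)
The plan is to rewrite $H^m_Z(X,A_X)$ via Verdier duality as a $\Hom$ in the derived category of $A$-modules expressed through the cohomology of $Z$, and then to read off the short exact sequence from the universal coefficients theorem (Theorem \ref{uc}).

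First I would unwind $H^m_Z(X,A_X)$ using that $X$ is an orientable $n$-manifold: then $\omega_X = a_X^!(A)\simeq A_X[n]$, hence $A_X[m]\simeq \omega_X[m-n] = a_X^!(A[m-n])$, the shift passing through $a_X^!$ because it is a functor of triangulated categories. By Lemma \ref{2.4.2} and Lemma \ref{1.4.3},
\[
    H^m_Z(X,A_X)\;\simeq\;\Ext^m(i_!A_Z,A_X)\;\simeq\;\Hom_{D(X)}\bigl(i_!A_Z,\,a_X^!(A[m-n])\bigr).
\]
Writing $a_Z = a_X\circ i\colon Z\to pt$, so that $Ra_{X!}\circ i_! = Ra_{Z!}$ (a closed immersion has $i_!$ exact, and $R(-)_!$ composes), Verdier duality — the adjunction $Ra_{X!}\dashv a_X^!$ — applied to $a_X$ gives
\[
    H^m_Z(X,A_X)\;\simeq\;\Hom_{D(pt)}\bigl(Ra_{Z!}A_Z,\,A[m-n]\bigr).
\]
Set $C^\bullet := Ra_{Z!}A_Z$; this is a cohomologically bounded complex of $A$-modules with $H^k(C^\bullet) = H^k_c(Z,A_Z)$, which is $H^k(Z,A_Z)$ once $Z$ is compact — the case relevant here, where $Z$ is a projective hypersurface, i.e. a closed subset of a compact ambient space.

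Next I would compute $\Hom_{D(pt)}(C^\bullet, A[m-n])$ by resolving $A$. Over a principal ideal domain $A$ has injective dimension at most $1$, so $0\to A\to \mathrm{Frac}(A)\to \mathrm{Frac}(A)/A\to 0$ is a bounded injective resolution $A\to I^\bullet$; since $I^\bullet$ is a bounded-below complex of injectives, $\Hom_{D(pt)}(C^\bullet, A[m-n])\simeq \Hom_{K(pt)}(C^\bullet, I^\bullet[m-n])$, which by Lemma \ref{1.4.1} equals $H^{m-n}\bigl(\Hom^\bullet(C^\bullet, I^\bullet)\bigr)$. Theorem \ref{uc} in its second form — valid for the arbitrary complex $C^\bullet$ together with the bounded injective resolution $A\to I^\bullet$, with Condition \ref{cond}(2) automatic since $I^\bullet$ is bounded — specialized to index $n-m$ then yields
\[
    0\to \Ext^1\bigl(H^{n-m+1}(C^\bullet),A\bigr)\to H^{m-n}\bigl(\Hom^\bullet(C^\bullet,I^\bullet)\bigr)\to \Hom\bigl(H^{n-m}(C^\bullet),A\bigr)\to 0.
\]
Substituting $H^k(C^\bullet)=H^k(Z,A_Z)$, using $\Ext^1(-,A)=\Ext(-,A)$ over a PID and $\Hom(-,A)=(-)^\vee$, and identifying the middle term with $H^m_Z(X,A_X)$ via the previous paragraph, produces exactly the asserted exact sequence.

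The step I expect to cost the most care is the bookkeeping of the several shifts — the $[-n]$ from orientability, the $[m]$ turning $\Ext^m$ into a derived $\Hom$, and the reindexing inside Theorem \ref{uc} — together with the identification $H^\bullet(Ra_{Z!}A_Z)\cong H^\bullet(Z,A_Z)$, which genuinely uses compactness of $Z$ (in general $Ra_{Z!}$ computes only compactly supported cohomology) and requires checking that $Ra_{Z!}A_Z$ lies in $D^b(mod(A))$ so that the hypotheses of Theorem \ref{uc} really apply. Checking that the resulting maps are the ones induced by the (extraordinary) cup product, as implicit in the commutative square preceding this subsection, is a further compatibility, but it follows by tracing the adjunctions above.
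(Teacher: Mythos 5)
Your proposal is correct and follows essentially the same route as the paper: rewrite $H^m_Z(X,A_X)$ via Lemma \ref{2.4.2}, orientability ($\omega_X\simeq A_X[n]$) and the Verdier adjunction for $a_X$ as a derived $\Hom$ into $A$ over a point, then apply Lemma \ref{1.4.1} and Theorem \ref{uc} with a bounded injective resolution of $A$, and finally identify the cohomology of $Ra_{Z!}A_Z$ (the paper's $\RR\Gamma_c(X,i_!J^\bullet)$) with $H^\bullet(Z,A_Z)$. Your explicit flagging that this last identification uses compactness of $Z$ (otherwise one only gets compactly supported cohomology) is a point the paper's proof passes over silently, but it holds in the intended application, so there is no substantive difference.
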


\begin{proof}
    Let $J^{\bullet}$ be an injective resolution of the constant sheaf $A_Z$ on $Z$, and denote $I^{\bullet}=i_!J^{\bullet}$. Then by Lemmas \ref{2.4.2} and \ref{1.4.3} and the Verdier duality
\begin{multline*}
    H_Z^m(X, A_X) \simeq \Ext^m(I^{\bullet}, A_X) \simeq \Hom_{D^b(X)}(I^{\bullet}, \omega_X[m-n]) \\
    \simeq \Hom_{D^b(X)}(I^{\bullet}[n-m], a^!_X (A))
    \simeq \Hom_{D^b(pt)} (\RR\Gamma_c (X, I^{\bullet})[n-m], A) .
\end{multline*}

     Let $K^{\bullet}$ be an injective resolution of $A$. Then
\begin{equation*}
     \Hom_{D^b(pt)} (\RR\Gamma_c (X, I^{\bullet})[n-m], A) \simeq \Hom_{K(pt)} (\RR\Gamma_c (X, I^{\bullet})[n-m], K^{\bullet}) 
\end{equation*}

    and by Lemma \ref{1.4.1} we get
\begin{equation*}
    \Hom_{K(pt)} (\RR\Gamma_c (X, I^{\bullet})[n-m], K^{\bullet}) \simeq H^{m-n} (\Hom^{\bullet}(\RR\Gamma_c (X, I^{\bullet}), K^{\bullet})).
\end{equation*}

    Now we can apply Theorem \ref{uc} for the complexes $\RR\Gamma_c (X, I^{\bullet})$ and $K^{\bullet}$ and get
\begin{multline*}
    0 \to \Ext (H_c^{n-m+1}(\RR\Gamma_c (X, I^{\bullet})), A) \to H_Z^m(X, A_X) \to \\
    \to \Hom (H_c^{n-m}(\RR\Gamma_c (X, I^{\bullet})), A) \to 0
\end{multline*}

    Finally, notice that
\begin{multline*}
    H_c^{n-m}(\RR\Gamma_c (X, I^{\bullet})) \simeq H^0 (\RR\Gamma_c (X, i_! J^{\bullet}[n-m])) \simeq \\
     H^0(\RR \Hom^{\bullet}(A_X, i_! J^{\bullet}[n-m])) \simeq \Hom_{D^b(X)}(A_X, i_! J^{\bullet}[n-m]) \simeq \\
     \Hom_{D^b(Z)}(i^* A_X, J^{\bullet}[n-m]) \simeq \Hom_{D^b(Z)}(A_Z, J^{\bullet}[n-m]) \simeq H^{n-m}(Z, A_Z) .
\end{multline*}

    Therefore
\begin{equation*}
     0 \to \Ext (H^{n-m+1}(Z, A_Z), A) \to H_Z^m(X, A_X) \to H^{n-m}(Z, A_Z)^{\vee} \to 0 
\end{equation*}
is exact.

\end{proof}

\section{Proof of the main result}

\begin{thm}\label{myt}
    Let $\widetilde{\P}^n = \P(q_0, \ldots, q_n)$ be a weighted projective space with pairwise coprime weights. Let $X$ be a 
 general hypersurface of $\widetilde{\P}^n$ of degree $d$ which is divided by all $q_i$. And let $ i\colon X \rightarrow \widetilde{\P}^n$ be the inclusion map. Then for $k < n-1$ 
    \begin{enumerate}
        \item \begin{equation*}
    H^k(X,\Z) = 
    \begin{cases}
            \Z, & \text{if $k$ is even}\\
            0, & \text{otherwise}
    \end{cases}
\end{equation*}
        \item for even $k=2r$ the pullback map $$i^* \colon H^{2r}(\widetilde{\P}^n, \Z) \rightarrow H^{2r}(X, \Z)$$ is multiplication by $\frac{l_r \cdot l_{n-r}}{l_n}$.
    \end{enumerate}
\end{thm}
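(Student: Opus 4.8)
The plan is to relate the integral cohomology of $X$ to the cohomology of $\widetilde{\P}^n$ with supports in $X$, and to control the latter through the complement $U = \widetilde{\P}^n \setminus X$ together with Alexander duality. Two geometric observations set the stage. First, $X$ is an ample hypersurface, so $U$ is an affine variety of complex dimension $n$, whence $H^j(U,\Z) = 0$ for all $j > n$. Second, by Proposition \ref{p1} the singular locus $\Sigma$ of $\widetilde{\P}^n$ is finite, and since $d$ is divisible by every $q_i$ a general $X$ misses $\Sigma$ (Proposition \ref{p2}); thus $X$ is a closed subset of the \emph{complex manifold} $V := \widetilde{\P}^n \setminus \Sigma$, and since cohomology with supports depends only on a neighbourhood of the support, $H^m_X(\widetilde{\P}^n, \Z) = H^m_X(V, \Z)$ for every $m$. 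Feeding the vanishing of $H^{\bullet}(U,\Z)$ in degrees $> n$ into the long exact sequence
\begin{equation*}
\cdots \to H^{m-1}(U,\Z) \to H^m_X(\widetilde{\P}^n,\Z) \xrightarrow{r} H^m(\widetilde{\P}^n,\Z) \to H^m(U,\Z) \to \cdots
\end{equation*}
shows that $r$ is an isomorphism for $m \ge n+2$.

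For part (1), apply Theorem \ref{AD} to the orientable $2n$-manifold $V$ and its closed subset $X$: for every $m$ there is a short exact sequence
\begin{equation*}
0 \to \Ext\big(H^{2n-m+1}(X,\Z),\Z\big) \to H^m_X(\widetilde{\P}^n,\Z) \to \Hom\big(H^{2n-m}(X,\Z),\Z\big) \to 0 .
\end{equation*}
For $m \ge n+2$ the middle term is $H^m(\widetilde{\P}^n,\Z)$, which by Theorem \ref{t1} equals $\Z$ when $m$ is even and $\le 2n$, and $0$ when $m$ is odd. Reading the sequence for odd $m$ forces $H^{2n-m}(X,\Z)$ to be torsion and $H^{2n-m+1}(X,\Z)$ to be torsion-free; reading it for even $m$ forces the finite $\Ext$-term to vanish, since it embeds into $\Z$, and then $\Hom(H^{2n-m}(X,\Z),\Z) \cong \Z$ while $H^{2n-m+1}(X,\Z)$ is again torsion-free. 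Setting $j = 2n-m$ and combining the reading at $m = 2n-j$ with the reading at $m = 2n-j+1$ gives, for every $j \le n-2$: if $j$ is even, $H^j(X,\Z)$ has rank one and is torsion-free, hence $\cong \Z$; if $j$ is odd, $H^j(X,\Z)$ is simultaneously torsion and torsion-free, hence $0$. This is statement (1) (and, as a byproduct, $H^{n-1}(X,\Z)$ is torsion-free).

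For part (2), fix $r$ with $2r \le n-2$ and use the commutative square furnished by the projection formula, Lemma \ref{pf}:
\begin{center}
\begin{tikzcd}
H^{2r}(\widetilde{\P}^n,\Z) \arrow{r}{i^*} \arrow[d] & H^{2r}(X,\Z) \arrow[d]\\
H^{2n-2r}(\widetilde{\P}^n,\Z)^{\vee} \arrow{r}{r^{\vee}} & H^{2n-2r}_X(\widetilde{\P}^n,\Z)^{\vee}
\end{tikzcd}
\end{center}
whose vertical maps are induced by the ordinary and the extraordinary cup product with values in $H^{2n}(\widetilde{\P}^n,\Z)$, respectively $H^{2n}_X(\widetilde{\P}^n,\Z)$, the two being identified by $r$ (an isomorphism in degree $2n$ by the first paragraph). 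Once $2r \le n-2$ every group in the square is infinite cyclic: $H^{2r}(X,\Z) = \Z$ by part (1); the vanishing $H^{2n-2r}(U,\Z) = H^{2n-2r-1}(U,\Z) = 0$ — which is exactly where $2r \le n-2$ enters — makes $r\colon H^{2n-2r}_X(\widetilde{\P}^n,\Z) \to H^{2n-2r}(\widetilde{\P}^n,\Z) = \Z\,\xi_{n-r}$ an isomorphism, so $r^{\vee}$ is one; and, since Verdier (hence Alexander) duality is compatible with the extraordinary cup product, Theorem \ref{AD} makes the right-hand vertical arrow an isomorphism as well. By formula \eqref{1} the left-hand vertical map is multiplication by $\xi_r \xi_{n-r} = \frac{l_r\,l_{n-r}}{l_n}\,\xi_n$, i.e. by the positive integer $\frac{l_r l_{n-r}}{l_n}$ in the chosen generators. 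Chasing $\xi_r$ around the square — equivalently, evaluating $r(\gamma \cup i^*\xi_r) = r(\gamma)\cup \xi_r$ on a generator $\gamma$ of $H^{2n-2r}_X(\widetilde{\P}^n,\Z)$ — yields $i^*\xi_r = \pm \frac{l_r l_{n-r}}{l_n}\,\eta_r$ for a generator $\eta_r$ of $H^{2r}(X,\Z)$; the sign is fixed to $+$ by testing on the ample class, as the restriction of an ample class is ample.

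I expect the main obstacle to be the bookkeeping in part (2): verifying that $H^{2n-2r}_X(\widetilde{\P}^n,\Z)$ is infinite cyclic and that the extraordinary cup-product pairing $H^{2n-2r}_X(\widetilde{\P}^n,\Z) \times H^{2r}(X,\Z) \to H^{2n}_X(\widetilde{\P}^n,\Z) \cong \Z$ is, via Theorem \ref{AD}, a perfect pairing, so that tracking a single generator through the diagram pins the multiplier down to $\frac{l_r l_{n-r}}{l_n}$ exactly, rather than up to a divisor or multiple of it. The geometric engine throughout is the affineness of $U = \widetilde{\P}^n \setminus X$, which supplies precisely the vanishing of $H^{\bullet}(U,\Z)$ above degree $n$ that is needed to turn local cohomology into the cohomology of $\widetilde{\P}^n$ in the relevant range; the singularities of $\widetilde{\P}^n$ cause no trouble because they all lie in $U$, away from $X$.
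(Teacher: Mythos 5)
Your proof is correct and follows essentially the same route as the paper: the local-cohomology long exact sequence combined with affineness of the complement, Alexander duality (Theorem \ref{AD}), and the projection-formula square of Lemma \ref{pf} together with the ring structure \eqref{1}. The only differences are cosmetic refinements --- excising to the smooth locus instead of passing to a tubular neighbourhood, and spelling out the torsion/torsion-free bookkeeping in part (1) --- which if anything tighten the paper's own argument.
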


\begin{proof}  Consider the long exact sequence for the cohomology with support in $X$:
\begin{center}
$\to H^{k-1}(\widetilde{\P}^n \setminus X, \underline{\Z}) \to H_X^{k}(\widetilde{\P}^n, \underline{\Z}) \to H^{k}(\widetilde{\P}^n, \underline{\Z}) \to H^{k}(\widetilde{\P}^n \setminus X, \underline{\Z}) \to$
\end{center}

$\widetilde{\P}^n \setminus X$ is affine so it has a homotopy type of an $n$-dimensional CW complex \cite{Karcjauskas}. Therefore, for all $k > n$ we have $H^{k}(\widetilde{\P}^n \setminus X, \underline{\Z}) = 0$ and the map from $H_X^{k}(\widetilde{\P}^n)$ to $H^{k}(\widetilde{\P}^n)$ is an isomorphism for $k > n+1$. Therefore 

\begin{equation*}
    H_X^{k}(\widetilde{\P}^n) \cong H^{k}(\widetilde{\P}^n) \text{    for  } k>n+1
\end{equation*}

We required such set of weights and such a degree that generic hypersurface $X$ is smooth and does not pass through the singularities of $\widetilde{\P}^n$. Therefore, we can replace $\widetilde{\P}^n$ with a smooth tubular neighborhood of $X$ to compute $H^k_X(\widetilde{\P}^n, \underline{\Z})^{\vee}$ \cite{iversen}[p.125]. 
It is an orientable manifold of dimension $2n$, so we can apply the Alexander duality Theorem \ref{AD} and obtain the exact sequence  

\begin{equation}\label{es}
     0 \to \Ext (H^{2n-k+1}(X, \underline{\Z}), \Z) \to H_X^k(\widetilde{\P}^n, \underline{\Z}) \to H^{2n-k}(X, \underline{\Z})^{\vee} \to 0 .
\end{equation}

\

Let $k>n+1$. 

If $k$ is odd, then $H_X^k(\widetilde{\P}^n, \underline{\Z}) = 0$, therefore $H^{2n-k}(X, \underline{\Z})^{\vee}=0$ and so $H^{2n-k}(X, \underline{\Z})=0$.

If $k$ is even, then $H_X^k(\widetilde{\P}^n, \underline{\Z}) = \Z$, therefore $H^{2n-k}(X, \underline{\Z})^{\vee}$ can be either $0$, $\Z$ or $\Z/m\Z$. Notice that $\Z/m\Z$ can not be dual of anything, so we are left with two options. So the exact sequence \eqref{es} can either be

\begin{equation*}
    0 \to 0 \to \Z \to \Z \to 0
\end{equation*}
or 
\begin{equation*}
    0 \to \Z \to \Z \to 0 \to 0
\end{equation*}

For every finitely generated abelian group $A$, $\Ext_{\Z}^1(A,\Z)$ is torsion, so \eqref{es} is isomorphic to $0 \to 0 \to \Z \to \Z \to 0$ and $H_X^{k}(\widetilde{\P}^n, \underline{\Z}) \simeq H^{2n-k}(X, \underline{\Z})^{\vee}$.
That proves the first part of the theorem. 

\

Let us prove the second statement. For even $k$ set $2r=k$.  Due to Lemma \ref{pf}, there is a commutative diagram:

\begin{center}

\begin{tikzcd}
H^{2r}(\widetilde{\P}^n, \Z) \arrow{r}{i^*} \arrow[d]              & H^{2r}(X, \Z) \arrow{d}{\simeq} \\
H^{2n - 2r}(\widetilde{\P}^n, \underline{\Z})^{\vee} \arrow{r}{\simeq} & H^{2n - 2r}_X(\widetilde{\P}^n, \underline{\Z})^{\vee}                        
\end{tikzcd}

\end{center}

The map $i^*$ is determined completely by the map from $H^{2r}(\widetilde{\P}^n)$ to $H^{2n-2r}(\widetilde{\P}^n)^{\vee}$. Using \eqref{1} one can see that it is multiplication by $\frac{l_r \cdot l_{n-r}}{l_n}$. 

\end{proof}

\section{Application: The intersection form on second cohomology of Fano manyfold}

Consider a general hypersurface $X$ of weighted degree $d$ in $\widetilde{\P}^{n+1}$ with at worst isolated singularities. It is a $2n$-dimensional manifold, so the cup product defines an $n$-form in the second cohomology group:

\begin{equation*}
    H^2(X, \Z) \otimes \ldots \otimes H^2(X, \Z) \to H^{2n}(X, \Z) \simeq \Z
\end{equation*}

 Note that $H^2(X, \Z) \simeq \Z$ due to the theorem \ref{myt}, so this $n$-form would be simply

\begin{equation}\label{3form}
    (\alpha_1, \ldots , \alpha_n) \to k \cdot \alpha_1 \cdot \ldots \cdot \alpha_n ,
\end{equation}
 
where $k$ is some integer.

\begin{cor}\label{c27}
    For $X$ a general smooth hypersurface of degree $d$ in $\widetilde{\P}^{n+1}$, the $n$-form on the second cohomology induced by the cup product is given by:

    \begin{equation*}
        (\alpha_1, \ldots, \alpha_n) \to \frac{l_{n+1}^{n-1} \cdot d}{l_n^n} \prod\limits_{i=1}^n \alpha_i
    \end{equation*}
\end{cor}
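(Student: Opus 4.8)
The plan is to combine Theorem~\ref{myt} applied to $X \hookrightarrow \widetilde{\P}^{n+1}$ with the multiplicative structure of $H^*(\widetilde{\P}^{n+1},\Z)$ from \eqref{1}, together with the known value of the top self-intersection of the hyperplane class on a weighted hypersurface. Concretely, write $\widetilde{\P}^{n+1} = \P(q_0,\ldots,q_{n+1})$, so that the ambient space has complex dimension $n+1$ and $X$ has complex dimension $n$. Let $\xi_r \in H^{2r}(\widetilde{\P}^{n+1},\Z)$ denote the Al Amrani generators from Theorem~\ref{t2}. The first step is to observe that $H^2(X,\Z) \simeq \Z$ is generated by the pullback class up to the index $\frac{l_1 l_n}{l_{n+1}}$ coming from Theorem~\ref{myt}(2): more precisely, $i^*\xi_1$ generates $H^2(X,\Z)$ after dividing by that factor, but it is cleaner to phrase everything in terms of $i^*\xi_1$ itself and only rescale at the end, or to work with the generator $\eta \in H^2(X,\Z)$ characterized by $i^*\xi_1 = \tfrac{l_1 l_n}{l_{n+1}}\,\eta$ — I would check which normalization makes the bookkeeping lightest.

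The core computation is to evaluate $(i^*\xi_1)^n \in H^{2n}(X,\Z) \simeq \Z$. Using the projection formula packaged in Lemma~\ref{pf} (or directly the compatibility of $i^*$ with cup products), we have that $(i^*\xi_1)^n$ equals the image under $i^*$ of $\xi_1^n \in H^{2n}(\widetilde{\P}^{n+1},\Z)$, pushed against the fundamental class of $X$. Repeatedly applying \eqref{1} gives $\xi_1^n = \frac{l_1^{\,n}}{l_n}\,\xi_n$ (telescoping the constants $\frac{l_1 l_{j}}{l_{j+1}}$ for $j=1,\ldots,n-1$, which multiply to $\frac{l_1^{n-1}}{l_n}$, times one more $l_1$ from $\xi_1\cdot\xi_1$ — I will recompute this telescoping carefully since it is exactly where an off-by-one error would creep in). Then $i^*\xi_n \in H^{2n}(X,\Z)$ is, by Theorem~\ref{myt}(2) with $2r = 2n$ and ambient dimension $n+1$, multiplication by $\frac{l_n \cdot l_1}{l_{n+1}}$ on $H^{2n}(\widetilde{\P}^{n+1},\Z) \simeq H^{2n}(X,\Z) \simeq \Z$. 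Finally, $[X]$ as a divisor class is $d$ times the ample generator in the appropriate normalization, and the degree of $\phi$ being $l_{n+1}$ (Theorem~\ref{t3}) converts the ``naive'' count of $\xi_{n+1}$-degree into the actual integer: the self-intersection $\xi_1^{n+1}$ evaluated on $\widetilde{\P}^{n+1}$ is $\frac{l_1^{n+1}}{l_{n+1}}$, and $\xi_1^n \cdot [X] = d \cdot (\text{correction})$. Assembling these, the constant $k$ in \eqref{3form}, expressed back in terms of a genuine $\Z$-generator of $H^2(X,\Z)$ (i.e. after dividing by $\left(\tfrac{l_1 l_n}{l_{n+1}}\right)^{n}$), should collapse to $\frac{l_{n+1}^{n-1} d}{l_n^{n}}$.

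I would organize the writeup as: (i) fix notation and recall $\dim_\C X = n$, $\dim_\C \widetilde{\P}^{n+1} = n+1$, so Theorem~\ref{myt} applies with its ``$n$'' replaced by $n+1$ and its range $k < n$ covering degrees $0,2,\ldots$ up to $2(n-1)$, which suffices since $H^2$ and all the $H^{2r}$ for $r \le n-1$ are what we touch, plus $H^{2n}(X,\Z) \simeq \Z$ by Poincaré duality on the smooth compact $X$; (ii) compute $\xi_1^n = \frac{l_1^n}{l_n}\xi_n$ in $H^*(\widetilde{\P}^{n+1},\Z)$ via \eqref{1}; (iii) compute $i^*$ on $\xi_1$ and on $\xi_n$ via Theorem~\ref{myt}(2); (iv) identify the generator $\eta$ of $H^2(X,\Z)$ and express $\eta^n$ as a rational multiple of the generator of $H^{2n}(X,\Z)$, determining the sign/size by the projection formula $i_*(\eta^n) \cdot (\text{ambient generator})$ or equivalently by computing $\int_X (i^*\xi_1)^n$ and dividing; (v) use $[X] = d\,\xi_1$ (in the divisor normalization where $\xi_1$ restricts correctly — i.e. $[X] = \tfrac{d}{l_1}$ times $l_1\xi_1$, being careful that $X$ has degree $d$ and the first Chern class of $\mathcal{O}(1)$ on $\widetilde{\P}^{n+1}$ corresponds to $\xi_1/l_1$ rationally but $\xi_1$ integrally), together with $\xi_1 \cdot \xi_n = \frac{l_1 l_n}{l_{n+1}}\xi_{n+1}$ and $\deg(\xi_{n+1}\text{ on }\widetilde{\P}^{n+1}) = \frac{1}{?}$ — here I will pin down the normalization by the fact that $\phi^*\xi_{n+1} = l_{n+1}\xi^{n+1}$ and $\deg\phi = l_{n+1}$, so $\xi_{n+1}$ evaluates to $1$ on $[\widetilde{\P}^{n+1}]$ precisely when $l_{n+1}\cdot 1 = l_{n+1}$ on $[\P^{n+1}]$, i.e. $\xi_{n+1}$ is the generator dual to the fundamental class; (vi) collect constants.

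The main obstacle is bookkeeping the normalizations consistently: there are three competing ``natural'' classes in $H^2$ (the Al Amrani $\xi_1$, the divisor class of a degree-$1$ hypersurface $\mathcal{O}(1)$, and a genuine $\Z$-generator of $H^2(X,\Z)$), and the factors $l_1, l_n, l_{n+1}, d$ enter at each pullback and each cup product. The subtle point is that $i^*$ is \emph{not} surjective onto $H^2(X,\Z)$ — its image has index $\frac{l_1 l_n}{l_{n+1}}$ — so the $n$-form evaluated on a true $\Z$-generator picks up a factor of $\left(\frac{l_{n+1}}{l_1 l_n}\right)^n$ relative to the same form evaluated on $i^*\xi_1$; combining this with $(i^*\xi_1)^n = \frac{l_1^n}{l_n}\cdot\frac{l_n l_1}{l_{n+1}}\cdot d \cdot \frac{1}{l_{n+1}} \cdot (\text{eval of }\xi_{n+1}) = \frac{l_1^{n+1} d}{l_{n+1}^2}$ — where one of the $l_{n+1}$'s is the $\xi_n\cdot\xi_1$ coefficient and the other is from re-expressing via $\deg\phi$ — and simplifying should yield $k = \frac{l_{n+1}^{n-1} d}{l_n^n}$. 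I expect the bulk of the proof to be this one careful chain of equalities, stated as a short computation once the normalizations are fixed; no new ideas beyond Theorem~\ref{myt}, \eqref{1}, and Theorem~\ref{t3} are needed.
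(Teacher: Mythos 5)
The central step of your plan has a genuine gap. You propose to evaluate $(i^*\xi_1)^n$ by writing $\xi_1^n=\frac{l_1^n}{l_n}\xi_n$ via \eqref{1} and then applying Theorem~\ref{myt}(2) ``with $2r=2n$ and ambient dimension $n+1$'' to conclude that $i^*\colon H^{2n}(\widetilde{\P}^{n+1},\Z)\to H^{2n}(X,\Z)$ is multiplication by $\frac{l_1 l_n}{l_{n+1}}$. But with the ambient space of dimension $n+1$ the theorem only covers cohomological degrees $k<n$ (not ``up to $2(n-1)$'' as you assert), so degree $2n$ — the top degree of $X$ — is far outside its range, and the claimed value is in fact false: the correct multiplier on $H^{2n}$ is $\frac{d\,l_n}{l_{n+1}}$, which depends on the degree of the hypersurface. (Concretely, for the quartic in $\P(1,1,1,1,2)$ one gets $4$, not your $\frac{l_1 l_3}{l_4}=2$.) This is not a bookkeeping slip: the factor $d$ in the answer cannot come out of the ring structure \eqref{1} and the below-middle-degree pullbacks alone, and the restriction map in top degree is exactly where $d$ must enter. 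Your fallback route — invoking $[X]=d\cdot(\text{ample class})$, $\int_X$, and a projection formula with a pushforward $i_*$ — implicitly uses Poincar\'e duality/pushforward with integer coefficients on the singular ambient space, which is precisely what fails here (Lemma~\ref{pf} concerns cohomology with supports, not a Gysin map on $\widetilde{\P}^{n+1}$), so as written this part is not justified either. Note also that your final numbers only appear to work because $l_1=l_{n+1}$ when the weights are pairwise coprime; the intermediate identity for $i^*\xi_n$ is still wrong.

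The paper closes exactly this gap by a different device, which is absent from your plan: it pulls everything back along the covering $\phi\colon\P^{n+1}\to\widetilde{\P}^{n+1}$ of Theorem~\ref{t2}, sets $X'=\phi^{-1}(X)$ (an honest degree-$d$ hypersurface in ordinary projective space), and compares the two $n$-forms through the commutative diagram built from: $\phi^*$ in degrees $2$ and $2n$ (Theorem~\ref{t2}), the classical Lefschetz theorem and $\int_{X'}h^n=d$ on $X'$, the degree $\deg\phi=l_{n+1}$ (Theorem~\ref{t3}) giving the map $H^{2n}(X)\to H^{2n}(X')$, and Theorem~\ref{myt} only for $i^*$ on $H^2$. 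This yields $\left(\frac{l_1 l_n}{l_{n+1}}\right)^n k\,l_{n+1}=l_1^n d$, hence $k=\frac{l_{n+1}^{n-1}d}{l_n^n}$, and in particular $(i^*\xi_1)^n=\frac{l_1^n d}{l_{n+1}}$ times the generator, not $\frac{l_1^{n+1}d}{l_{n+1}^2}$ as in your assembled chain. To repair your argument you would either have to adopt this covering trick, or independently establish the top-degree restriction $i^*\xi_n=\frac{d\,l_n}{l_{n+1}}$ (e.g.\ by a rational-coefficient duality argument carried out carefully on the singular ambient space), neither of which is currently in your proposal.
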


\begin{proof}
    
Recall the map $\phi\colon \P^{n+1} \to \widetilde{\P}^{n+1}$ sending $(x_0: \ldots : x_{n+1})$ to $\phi (x_0: \ldots :x_{n+1}) = (x_0^{q_1}: \ldots :x_{n+1}^{q_{n+1}})$ from the Theorem \ref{t2}.

Denote $X' = \phi^{-1}(X)$ - the preimage of $X$ under map $\phi$. Note that $X'$ is the hypersurface of degree $d$ but in the ordinary projective space. Then the diagram

\begin{center}
    \begin{tikzcd}
X' \arrow[r, hook] \arrow[d] & \P^{n+1} \arrow[d] \\
X \arrow[r, hook]            & \widetilde{\P}^{n+1}  
\end{tikzcd}
\end{center}

leads to the commutative diagram in cohomology 

\begin{center}
\begin{tikzcd}
H^2(\widetilde{\P}^{n+1}) \arrow{r}{i^*} \arrow{rdd}{\phi^*} & H^2(X) \arrow[r] \arrow[d]                                    & H^{2n}(X) \arrow[d] & H^{2n}(\widetilde{\P}^{n+1}) \arrow[l] \arrow{ldd}{\phi^*} \\
                                                      & H^2(X') \arrow[r]                                             & H^{2n}(X')          &                                        \\
                                                      & H^2(\P^{n+1}) \arrow{r} \arrow[u] & H^{2n}(\P^{n+1}) \arrow[u] &                                       
\end{tikzcd}
\end{center}

where maps from $H^2$ to $H^{2n}$ are described $n$-forms. 

We can compute some of the maps in the diagram:

\begin{itemize}
    \item the maps $\phi^*$ are given by Theorem \ref{t2};

    \item the map $H^{2n}(X) \to H^{2n}(X')$ is multiplication by degree of a map $\phi$ and equals the $l_n$ due to Theorem \ref{t3};

    \item the map from $H^2(\P^{n+1})$ to $H^2(X')$ is isomorphism due to the Lefschetz theorem;

    \item the map from $H^{2n}(\P^{n+1})$ to $H^{2n}(X')$ is multiplication by the degree of $X'$

    \item and the map $i^*$ is multiplication by $\frac{l_1 \cdot l_n}{l_{n+1}}$ by the Theorem \ref{myt}.
\end{itemize}

\

Putting it all in the diagram above we have:

\begin{center}
\begin{tikzcd}
H^2(\widetilde{\P}^{n+1}) \arrow{r}{\cdot \frac{l_1 \cdot l_n}{l_{n+1}}} \arrow{rdd}{\cdot l_1} & H^2(X) \arrow[r] \arrow[d]                                    & H^{2n}(X) \arrow{d}{\cdot l_{n+1}} & H^{2n}(\widetilde{\P}^{n+1}) \arrow[l] \arrow{ldd}{\cdot l_n} \\
                                                      & H^2(X') \arrow{r}{\alpha \to d \cdot \alpha^n}                                             & H^{2n}(X')          &                                        \\
                                                      & H^2(\P^{n+1}) \arrow{r}{\alpha \to \alpha^n} \arrow[u, Rightarrow] & H^{2n}(\P^{n+1}) \arrow{u}{\cdot d} &                                       
\end{tikzcd}
\end{center}

This information is enough to compute all remaining maps in the diagram, including the desired $n$-form.

\end{proof}

We can apply this to Fano manifolds of this form. Below is the table for dimensions not greater than 6.

\begin{table}[H]
\caption{ }
\label{table:1}
\begin{center}
\begin{tabular}{| c | c | c | c | c |} 
 \hline
 dim & w.p.s & degree & intersection form multiple & index \\ [0.5ex] 
 \hline\hline
 3 & $\P(1,1,1,1,2)$ & 4 & 2 & 2\\ 
 \hline
 3 & $\P(1,1,1,1,3)$ & 6 & 2 & 1\\ 
 \hline
 3 & $\P(1,1,1,2,3)$ & 6 & 1 & 2\\ 
 \hline\hline
 
 4 & $\P(1,1,1,1,1,2)$ & 4 & 2 & 3\\ 
 \hline
 4 & $\P(1,1,1,1,1,2)$ & 6 & 3 & 1\\ 
 \hline
 3 & $\P(1,1,1,1,1,3)$ & 6 & 2 & 2\\ 
 \hline
 4 & $\P(1,1,1,1,1,4)$ & 8 & 2 & 1\\ 
 \hline
 4 & $\P(1,1,1,1,2,3)$ & 6 & 1 & 3\\  
 \hline
 4 & $\P(1,1,1,1,2,5)$ & 10 & 1 & 1\\  
 \hline\hline
 
 5 & $\P(1,1,1,1,1,1,2)$ & 4 & 2 & 4\\ 
 \hline
 5 & $\P(1,1,1,1,1,1,2)$ & 6 & 3 & 2\\ 
 \hline
 3 & $\P(1,1,1,1,1,1,3)$ & 6 & 2 & 3\\ 
 \hline
 5 & $\P(1,1,1,1,1,1,4)$ & 8 & 4 & 2\\ 
 \hline
 5 & $\P(1,1,1,1,1,2,3)$ & 6 & 1 & 4\\  
 \hline
 5 & $\P(1,1,1,1,1,2,5)$ & 10 & 1 & 2\\  
 \hline\hline
 
 6 & $\P(1,1,1,1,1,1,1,2)$ & 4 & 2 & 5\\  
 \hline
 6 & $\P(1,1,1,1,1,1,1,2)$ & 6 & 3 & 3\\  
 \hline
 6 & $\P(1,1,1,1,1,1,1,2)$ & 8 & 4 & 1\\  
 \hline
 6 & $\P(1,1,1,1,1,1,1,3)$ & 6 & 2 & 4\\  
 \hline
 6 & $\P(1,1,1,1,1,1,1,3)$ & 9 & 3 & 1\\  
 \hline
 6 & $\P(1,1,1,1,1,1,1,4)$ & 8 & 2 & 3\\  
 \hline
 6 & $\P(1,1,1,1,1,1,2,3)$ & 6 & 1 & 5\\  
 \hline
 6 & $\P(1,1,1,1,1,1,2,5)$ & 10 & 1 & 3\\  
 \hline
 6 & $\P(1,1,1,1,1,1,3,4)$ & 12 & 1 & 1\\  
 \hline
 6 & $\P(1,1,1,1,1,1,2,7)$ & 14 & 1 & 1\\  
 \hline
\end{tabular}
\end{center}
\end{table}

From the Table \ref{table:1}, one can see that for different manifolds, the intersection form together with the index also differs for dimensions lower than 3, 4 and 5. However, for two hypersurfaces of dimension 6: degree 12 in $\P(1,1,1,1,1,1,3,4)$ and degree 14 in $\P(1,1,1,1,1,1,2,7)$, both the intersection form and the index are the same. It may happen that those hypersurfaces are isomorphic. Using the Hodge diamond cutter \cite{pieter_belmans_2024_10854199}, one can compute the Hodge diamonds of those manifolds. The Hodge diamonds are different; therefore, those Fanos are not isomorphic, and this counterexample indeed breaks the hypothesis about using the intersection form and index as complete invariants for hypersurfaces in dimension 6.

\medskip

\printbibliography

\end{document}